\documentclass[runningheads]{llncs}
\usepackage{graphicx}
\usepackage{amssymb, amsmath}

\renewcommand{\i}{\mathrm{i}}
\numberwithin{equation}{section}

\newcommand{\N}{\mathbb{N}}
%

\begin{document}
\title{Gauge freedom of entropies \\ on $q$-Gaussian measures}
\titlerunning{Gauge freedom of entropies on $q$-Gaussian measures}
\author{Hiroshi Matsuzoe
\thanks{The both authors were supported in part by JSPS Grant-in-Aid for Scientific Research (KAKENHI) 16KT0132.}
\thanks{HM was supported in part by KAKENHI 19K03489.}
\inst{1}
\and
Asuka Takatsu ${}^*$
\thanks{AT was supported in part by KAKENHI 19K03494, 19H01786.}
\inst{2,3}}
\authorrunning{H. Matsuzoe and A. Takatsu}
\institute{ 
Department of Computer Science, Nagoya Institute of Technology, Nagoya, Japan
\email{matsuzoe@nitech.ac.jp} \and
Department of Mathematical Sciences, Tokyo Metropolitan University, \\ Tokyo, Japan
\email{asuka@tmu.ac.jp} \and
RIKEN Center for Advanced Intelligence Project (AIP), Tokyo, Japan}
\maketitle              
\begin{abstract}
A $q$-Gaussian measure is a generalization of a Gaussian measure.
This generalization is obtained by replacing the exponential function with the power function of exponent $1/(1-q)$ ($q\neq 1$).
The limit case $q=1$ recovers a Gaussian measure.
For $1\leq q <3$,  the set of all $q$-Gaussian densities over the real line satisfies a certain regularity condition to define  information geometric structures such as an entropy and a relative entropy via escort expectations.
The ordinary expectation of a random variable  is  the integral of the random variable  with respect to its law.
Escort expectations admit us to replace  the law  to any other measures.
A choice of escort expectations on the set of all $q$-Gaussian densities determines an entropy and a relative entropy.
One of most important escort expectations on the set of all $q$-Gaussian densities is the $q$-escort expectation 
since this escort expectation determines the Tsallis entropy and the Tsallis  relative entropy.

The  phenomenon {\it gauge freedom of entropies} is that
different escort expectations determine the same entropy, but different relative entropies.
%
In this note, we first  introduce a refinement of the  $q$-logarithmic function.
Then we demonstrate the phenomenon on  an open set  of all $q$-Gaussian densities over the real line by using the refined $q$-logarithmic functions.
We  write down the corresponding  Riemannian metric.
\keywords{Information geometry  \and gauge freedom of entropies
\and refined $q$-logarithmic function  \and $q$-Gaussian measure}
\end{abstract}

\section{$q$-Logarithmic functions and their refinements}
\subsection{Definitions}
For $q\in\mathbb{R}$, we set $\chi_{q}:(0,\infty)\to (0,\infty)$ by
\[
\chi_q(s):=s^{q}.
\]
We define a strictly increasing function $\ln_q:(0,\infty) \to \mathbb{R}$ by
\[
\ln_{q}(t):
=\int_1^t \frac{1}{\chi_q(s)} ds
\]
and we denote by $\exp_{q}$  the inverse function of $\ln_{q}:(0,\infty) \to \ln_q(0,\infty)$.
The functions  $\ln_q$ and  $\exp_{q}$ are called the {\em $q$-logarithmic function} and  the {\em $q$-exponential function}, respectively.
We observe that 
\begin{align*}
\frac{d}{dt}\ln_{q}(t)&=\frac{1}{\chi_q(t)}=t^{-q} && \text{for\ } t\in(0,\infty),\\
\frac{d}{d\tau}\exp_{q}(\tau)&=\chi_q( \exp_q(\tau))=\exp_q(\tau)^{q} && \text{for\ } \tau\in \ln_q(0,\infty).
\end{align*}
It holds for $q\in \mathbb{R}$ that $\chi_q(1)=1$ and $\ln_q(1)=0$.
\begin{remark}
\begin{enumerate}
\item[(1)]
For $q=1$, we have that 
\begin{align*}
\ln_{1}(t)&=\log (t) \qquad \text{for\ }t\in (0,\infty),  \\
\ln_{1}(0,\infty)&=\mathbb{R},\\ 
\exp_{1}(\tau)&=\exp(\tau)\qquad \text{for\ }\tau \in \mathbb{R}.
\end{align*}
\item[(2)]
For $q\neq1$, we have that
\begin{align*}
\ln_q(t)&=\frac{t^{1-q}-1}{1-q} \qquad \text{for\ } t\in (0,\infty),  \\
\ln_{q}(0,\infty)&=
\begin{cases}
\displaystyle
\left(-\infty, \frac{1}{q-1}\right) & \text{if\ }q>1,\\[10pt]
\displaystyle
\left(-\frac{1}{1-q},\infty\right) & \text{if\ }q<1, 
\end{cases}\\
\exp_q(\tau)&=\{1+(1-q)\tau \}^{\frac1{1-q}} \qquad \text{for\ }\tau \in \ln_q(0,\infty).
\end{align*}
\end{enumerate}
\end{remark}

Taking account into the negativity of $\ln_q$ in $(0,1)$,  we  introduce  a refinement of  the $q$-logarithmic function and the $q$-exponential function.
For $q\in \mathbb{R}$ and ${a} \in \mathbb{R}\setminus\{0\}$,
define two functions $\chi_{q,{a}}: (0,1) \to (0,\infty)$ and $\ln_{q,{a}}: (0,1) \to \mathbb{R}$ respectively by
\[
\chi_{q,{a}}(s):=\chi_q(s) \cdot  (-\ln_q (s))^{1-{a}}, \qquad
\ln_{q,{a}}(t):=-\frac1{{a}}\big(-\ln_q(t) \big)^{{a}}.
\]
It turns out that 
\begin{align}\notag
\frac{d}{ds}\chi_{q,{a}}(s)&=
\chi_q'(s) (-\ln_q(s))^{1-{a}}
-
(1-{a}) (-\ln_q(s))^{-{a}} && \text{for\ $s\in (0,1)$,}\\   \label{deriv}
\frac{d}{dt}\ln_{q,{a}}(t)&=\frac{1}{\chi_{q,{a}}(t)}>0 && \text{for\ $t\in (0,1)$.}
\end{align}
Hence the function  $\ln_{q,{a}}: (0,1) \to \mathbb{R}$ is  strictly increasing.
We denote by $\exp_{q,{a}}$ the inverse function of $\ln_{q,{a}}:(0,1) \to \ln_{q,{a}}(0,1)$, which is give by 
\begin{equation}\label{ea}
\exp_{q,{a}}(\tau)=\exp_q\left(-\left(-{a} \tau\right)^\frac1{{a}} \right)\qquad
\text{for $\tau \in \ln_{q,{a}}(0,1)$.}
\end{equation}
The functions  $\ln_{q,{a}}$ and  $\exp_{q,{a}}$ are called 
the {\em ${a}$-refined $q$-logarithmic function} and  the  {\em ${a}$-refined $q$-exponential  function}, respectively.
\if0
We observe that 
\begin{align*}
\frac{d}{d\tau}\exp_{q,{a}}(\tau)&=
\chi_{q,{a}}(\exp_{q,{a}}(\tau))
\qquad
\text{for $\tau \in \ln_{q,{a}}(0,1)$,}\\
\lim_{t \uparrow 1}\ln_{q,{a}}(t)
&=\begin{cases}
0 &{a} >0,\\
\infty &{a} <0.
\end{cases}
\end{align*}
\fi

On one hand, it holds for $q \geq1$ that 
\[
\ln_{q,{a}}(0,1)
=\begin{cases}
(-\infty,0) &\text{if\ } {a} >0,\\
(0,\infty) & \text{if\ }{a} <0.
\end{cases}
\]
 On the other hand, it holds for $q <1 $ that 
 \[
 \ln_{q,{a}}(0,1)=
\begin{cases}
\displaystyle
\left(-\frac1{{a}}(1-q)^{-{a}},0\right) &\text{if\ } {a} >0,\\[10pt]
\displaystyle
\left(-\frac1{{a}}(1-q)^{-{a}},\infty\right) &\text{if\ }{a} <0.
\end{cases}
 \]
\begin{remark}
\begin{enumerate}
\item[(1)]
The refinement of the ordinary logarithmic function, that is the case $q=1$, was introduced by Ishige, Salani and the second named author~\cite{IST-2019},
where they studied the preservation of concavity by the heat flow in Euclidean space.
\item[(2)]
For a positive function $\chi:(0,\infty) \to (0,\infty)$ and ${a} \in \mathbb{R}\setminus\{0\}$, 
the $\chi$-logarithmic function $\ln_\chi:(0,\infty) \to \mathbb{R}$ and 
its refinement $\ln_{\chi, {a}}:(0,1) \to \mathbb{R}$ are respectively defined in the same way as $\chi_q$.

\end{enumerate}
\end{remark}
\subsection{Properties}
In this section, we give a condition for $\ln_{q,{a}}$ to be concave and compute the higher order derivatives of $\exp_{q,{a}}$,
which will be used to define  information geometric structures.

For $q\in \mathbb{R}$ and ${a}\in \mathbb{R}\setminus\{0\}$,  define
\begin{align*}
&t_{q,{a}}:=
\begin{cases} 
0& \text{if either $q>0$ or $q=0$ with ${a}-1>0$},\\
1& \text{if $q\leq  0$ with ${a}-1 \leq  0$}, \\ 
\dfrac{1}{\exp_{q}\left(\frac{1-{a}}{q}\right)}& \text{otherwise},
\end{cases}\\
&
T_{q,{a}}:=
\begin{cases} 
0&
\text{if $q>1$ with\ }1-{a}\geq \dfrac{q}{q-1},\\
1&  \text{if $q \leq 0$},\\
\dfrac{1}{\exp_{q}\left(\max\left\{0, \frac{1-{a}}{q}\right\} \right)} & \text{otherwise},
\end{cases}
\end{align*}
and 
set $I_{q,{a}}:=(t_{q,{a}}, T_{q,{a}})$.
%
Note that 
$I_{q,{a}}$ is nonempty if and only if one of the following three conditions holds:
\begin{itemize}
\item[$\bullet$] $q>1$ with $1-{a}< \dfrac{q}{q-1}$;
\item[$\bullet$] $0<q\leq1$;
\item[$\bullet$] $q\leq 0$ with $a-1>0$.
\end{itemize}
\begin{proposition}\label{concave}
Fix $q\in \mathbb{R}$ and ${a}\in \mathbb{R}\setminus\{0\}$.
For an interval $I\subset (0,1)$,
the strict concavity of $\ln_{q,{a}}$ in $I$ is equivalent to the strict convexity of $\exp_{q,{a}}$ in $\ln_{q,{a}}(I)$.
Moreover,   if  $I_{q,{a}} \neq \emptyset$, 
then $\ln_{q,{a}}$ is strictly concave in~$I_{q,{a}}$.
\end{proposition}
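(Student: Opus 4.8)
The plan is to reduce both claims to the monotonicity of $\chi_{q,a}$, and then to read off the sign of $\chi_{q,a}'$ from \eqref{deriv}.

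By \eqref{deriv} the map $\ln_{q,a}\colon(0,1)\to\ln_{q,a}(0,1)$ is a strictly increasing $C^{1}$-diffeomorphism with $\ln_{q,a}'=1/\chi_{q,a}>0$, and differentiating $\ln_{q,a}(\exp_{q,a}(\tau))=\tau$ yields $\exp_{q,a}'(\tau)=\chi_{q,a}(\exp_{q,a}(\tau))$. Hence, for an interval $I\subset(0,1)$, strict concavity of the $C^{1}$ function $\ln_{q,a}$ on $I$ is equivalent to $\ln_{q,a}'=1/\chi_{q,a}$ being strictly decreasing on $I$, i.e.\ to $\chi_{q,a}$ being strictly increasing on $I$; while strict convexity of $\exp_{q,a}$ on $\ln_{q,a}(I)$ is equivalent to $\exp_{q,a}'=\chi_{q,a}\circ\exp_{q,a}$ being strictly increasing on $\ln_{q,a}(I)$, which --- since $\exp_{q,a}$ maps $\ln_{q,a}(I)$ increasingly and bijectively onto $I$ --- is again equivalent to $\chi_{q,a}$ being strictly increasing on $I$. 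This proves the first assertion; it therefore also suffices, for the second one, to show that $\chi_{q,a}'>0$ on $I_{q,a}$.

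Substituting $\chi_q(s)=s^{q}$ into \eqref{deriv} and pulling out the positive factor $(-\ln_q(s))^{-a}$ (recall $-\ln_q(s)>0$ on $(0,1)$) gives
\[
\chi_{q,a}'(s)=(-\ln_q(s))^{-a}\bigl(\phi(s)-(1-a)\bigr),\qquad \phi(s):=q\,s^{q-1}\,(-\ln_q(s)),
\]
so we must verify $\phi(s)>1-a$ for every $s\in I_{q,a}$. From the explicit formulas for $\ln_q$ one gets $\phi(s)=\tfrac{q}{q-1}\bigl(1-s^{q-1}\bigr)$ when $q\neq1$ and $\phi(s)=-\log s$ when $q=1$, and in all cases $\phi'(s)=-q\,s^{q-2}$; thus $\phi$ is strictly decreasing on $(0,1)$ if $q>0$, identically $0$ if $q=0$, and strictly increasing on $(0,1)$ if $q<0$, always with $\lim_{s\uparrow1}\phi(s)=0$ and with $\lim_{s\downarrow0}\phi(s)$ equal to $\tfrac{q}{q-1}$ if $q>1$, to $+\infty$ if $0<q\le1$, and to $-\infty$ if $q<0$. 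In particular the superlevel set $\{\,s\in(0,1):\phi(s)>1-a\,\}$ is an interval; solving $\phi(s)=1-a$ --- which for $q\neq1$ reads $s^{q-1}=1+(1-q)\tfrac{1-a}{q}$, i.e.\ $s=1/\exp_q\!\bigl(\tfrac{1-a}{q}\bigr)$, the same expression being correct at $q=1$ as well --- and comparing $1-a$ with the limiting values of $\phi$ above identifies this superlevel set as exactly $(t_{q,a},T_{q,a})=I_{q,a}$. Hence $\phi>1-a$ on $I_{q,a}$, as required.

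The only genuinely laborious step is this last one: running through the sign patterns of $q-1$ and $1-a$, one must check that the three regimes ``$\phi>1-a$ on all of $(0,1)$'', ``$\phi>1-a$ on a proper subinterval'', and ``$\phi>1-a$ nowhere'' are reproduced precisely by the piecewise definitions of $t_{q,a}$ and $T_{q,a}$, with the truncations at $0$ and $1$ and the term $\max\{0,(1-a)/q\}$ accounting for the boundary cases. This bookkeeping is routine but needs care. (Alternatively, the first assertion follows at once from the elementary fact that the inverse of a strictly increasing strictly concave bijection between intervals is strictly convex.)
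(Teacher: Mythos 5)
Your argument is correct and, at bottom, runs on the same engine as the paper's proof: everything reduces to the sign of $\chi_{q,{a}}'$, equivalently of $\frac{d^2}{dt^2}\ln_{q,{a}}=-\chi_{q,{a}}'/\chi_{q,{a}}^2$, and your inequality $\phi(s)>1-{a}$ with $\phi(s)=qs^{q-1}(-\ln_q(s))$ is exactly the paper's condition $qt^{q-1}\ln_q(t)+(1-{a})<0$. The differences are in execution. For the first claim the paper argues directly from the definition of concavity, using only that $\exp_{q,{a}}$ is the increasing inverse of $\ln_{q,{a}}$ (no second derivatives, no $C^1$ characterization), whereas you invoke the equivalence between strict concavity/convexity of a $C^1$ function and strict monotonicity of its derivative; both are valid. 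For the second claim the paper does not identify the superlevel set $\{s\in(0,1):\phi(s)>1-{a}\}$ exactly: it splits into $q=0$, $q>0$, $q<0$, uses that $t\mapsto t^{q-1}\ln_q(t)=-\ln_q(1/t)$ is strictly increasing on $(0,1)$, and bounds the bracket by its value at the relevant endpoint of $I_{q,{a}}$ ($T_{q,{a}}$ for $q>0$, with $\max\{0,(1-{a})/q\}$ absorbing the case ${a}\geq 1$; $t_{q,{a}}$ for $q<0$), which gives the needed inclusion $I_{q,{a}}\subseteq\{\phi>1-{a}\}$ with much lighter bookkeeping. The one real weakness of your write-up is that you declare precisely this matching --- the only place where the definitions of $t_{q,{a}}$ and $T_{q,{a}}$ actually enter --- to be ``routine'' and do not carry it out; note also that since the proposition only asserts concavity on $I_{q,{a}}$, one inclusion suffices and the exact identification you claim is more than is needed. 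The identification does check out in all sign patterns (for instance, for $q>1$ the range of $\phi$ is $(0,q/(q-1))$, so $1-{a}\geq q/(q-1)$ gives an empty superlevel set, matching $T_{q,{a}}=0$, while ${a}\geq 1$ gives all of $(0,1)$, matching $\max\{0,(1-{a})/q\}=0$; for $q\leq 0$ with ${a}\leq 1$ one gets the empty set, matching $t_{q,{a}}=T_{q,{a}}=1$), so this is an incompleteness of exposition rather than an error.
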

\begin{proof}
Due to Equation \eqref{deriv},  $\ln_{q,{a}}$ is strictly increasing in $(0,1)$ and    so is  $\exp_{q,{a}}$ in $\ln_{q,{a}}(0,1)$.
Fix an interval $I\subset (0,1)$.
For
$t_i \in I, \tau_i\in \ln_{q,{a}}(I)$\  ($i=0,1$) with 
\[
\tau_i=\ln_{q,{a}}(t_i) \quad \text{or equivalently }\quad  t_i=\exp_{q,{a}}(\tau_i)
\]
and   $\lambda \in (0,1)$, 
it follows from the continuity of $\ln_{q,{a}}$ that 
\[
(1-\lambda) t_0 +\lambda t_1 \in I, \quad
(1-\lambda) \tau_0 +\lambda \tau_1 \in \ln_{q,{a}}(I).
\]
We observe from the monotonicity of $\ln_{q,{a}}$ and $\exp_{q,{a}}$ that 
\begin{align*}
&\ln_{q,{a}}\big( (1-\lambda) t_0 + \lambda t_1\big) > (1-\lambda) \ln_{q,{a}} (t_0) +  \lambda\ln_{q,{a}}(t_1)\\
\Leftrightarrow\ &
\ln_{q,{a}}\big( (1-\lambda) t_0 + \lambda t_1\big) >(1-\lambda) \tau_0 +\lambda \tau_1\\
\Leftrightarrow\ &
\exp_{q,{a}}\left( \ln_{q,{a}}\big( (1-\lambda) t_0 + \lambda t_1\big)\right)
 > \exp_{q,{a}}\left( (1-\lambda) \tau_0 +\lambda \tau_1 \right)\\
\Leftrightarrow\ &
(1-\lambda) t_0 +\lambda t_1 
 >\exp_{q,{a}}\left( (1-\lambda) \tau_0+  \lambda \tau_1 \right)\\
\Leftrightarrow\ &
(1-\lambda) \exp_{q,{a}}(\tau_0 )+\lambda \exp_{q,{a}}(\tau_1) 
 > \exp_{q,{a}}\left( (1-\lambda) \tau_0+  \lambda \tau_1 \right),
\end{align*}
where we used the fact that $\exp_{q,{a}}$ is the inverse function of  $\ln_{q,{a}}$.
This proves the first claim.

Assume $I_{q,{a}}\neq \emptyset$.
A direct calculation provides that 
\begin{align*}
\frac{d^2}{dt^2}\ln_{q,{a}} (t)
&=\frac{d}{dt}\frac{1}{\chi_{q,{a}} (t)} 
=-\frac{1 }{\chi_{q,{a}} (t)^2}\frac{d}{dt}\chi_{q,{a}}(t)\\
&=-\frac{(-\ln_q(t) )^{-{a}}}{\chi_{q,{a}} (t)^2}
\left\{\chi_q'(t) \left(-\ln_q(t)\right)  -(1-{a}) \right\}\\
&=
\frac{(-\ln_q(t) )^{-{a}}}{\chi_{q,{a}} (t)^2}
\left\{q t^{q-1}\ln_{q} (t)   +(1-{a}) \right\}.
\end{align*}
Notice that  $(-\ln_{q} (t) )^{-{a}}/\chi_{q,{a}} (t)^2$ is positive in $t\in I_{q,{a}}$.
In the case  $q=0$, 
the condition  $I_{0,{a}} \neq \emptyset$ leads to ${a}-1>0$,
consequently 
\[
\frac{d^2}{dt^2}\ln_{0,{a}} (t)
=\frac{(-\ln_0(t) )^{-{a}}}{\chi_{0,{a}} (t)^2} (1-{a})<0.
\]

Since the function  given by 
\[
t^{q-1} \ln_{q} (t) =-\ln_q \left(\frac1t\right)
=\begin{cases}
\log (t) &q=1,\\
\dfrac{1-t^{q-1}}{1-q} &q\neq 1
\end{cases}
\]
is strictly increasing in $ t\in (0,1)$, 
on one hand,  it holds for $q>0$ and $t\in I_{q,{a}}$ that 
\begin{align*}
\frac{d^2}{dt^2}\ln_{q,{a}} (t)
&
=
\frac{(-\ln_q(t) )^{-{a}}}{\chi_{q,{a}} (t)^2}
\left\{q t^{q-1}\ln_{q} (t)   +(1-{a}) \right\} \\
&
<
\frac{(-\ln_q(t) )^{-{a}}}{\chi_{q,{a}} (t)^2}
\left\{ -q \ln_q \left(\frac1{T_{q,{a}}}\right)   +(1-{a}) \right\} \\
&= 
\frac{(-\ln_q(t) )^{-{a}}}{\chi_{q,{a}} (t)^2}
\left\{-q \cdot \max\left\{0, \frac{1-{a}}{q}  \right\}+(1-{a}) \right\}\\
&=
 \frac{(-\ln_{q} (t) )^{-{a}}}{\chi_{q,{a}} (t)^2}
\left\{\min\left\{0, {a}-1  \right\} +(1-{a}) \right\}\\
&\leq 0.
\end{align*}
On the other hand, 
we see that
\begin{align*}
\frac{d^2}{dt^2}\ln_{q,{a}} (t)
&
=
\frac{(-\ln_q(t) )^{-{a}}}{\chi_{q,{a}} (t)^2}
\left\{q t^{q-1}\ln_{q} (t)   +(1-{a}) \right\} \\
&
<
\frac{(-\ln_q(t) )^{-{a}}}{\chi_{q,{a}} (t)^2}
\left\{ -q \ln_q \left(\frac1{t_{q,{a}}}\right)   +(1-{a}) \right\} \\
&= 
\frac{(-\ln_q(t) )^{-{a}}}{\chi_{q,{a}} (t)^2}
\left\{- q \cdot \frac{1-{a}}{q}  +(1-{a}) \right\}\\
&=0
\end{align*}
for $q<0$ and $t\in I_{q,a}$.
This completes the proof of the second claim.
\qquad$\qed$
\end{proof}

\begin{lemma}\label{seq}
For  $q\in \mathbb{R}$ and ${a}\in \mathbb{R}\setminus\{0\}$, 
there exists $\{b^n_j=b^n_j(q,{a})\}_{n\in \N, 0\leq j \leq n-1}$  such that 
\begin{align*}
\frac{d^n}{d\tau^n}\exp_{q,{a}}(\tau)
&=  \exp_{q,{a}}(\tau)^{(n-1)(q-1)+q} (-{a} \tau)^{\frac{n(1-{a})}{{a}}} \sum_{j=0}^{n-1} b_j^n(q,a) \cdot (-{a} \tau)^{-\frac{j}{a}}
\end{align*}
for  $\tau \in \ln_{q,{a}}(0,1)$.
Moreover, $\{b^n_j\}_{n\in \N, 0\leq j \leq n-1}$ satisfies 
\begin{align*}
b_0^1&=1,\\
b_j^{n+1}
&=\begin{cases}
\{n{a}(q-1)+1\} b_0^n &\text{if\ } j=0,\\
\{(n{a}+j)(q-1)+1\}b_j^n
-\{n(1-{a})-(j-1)\}b_{j-1}^n &\text{if\ }1\leq j \leq n-1,\\
(n{a}-1)b^n_{n-1} &\text{if\ } j=n.
\end{cases}
\end{align*}
\end{lemma}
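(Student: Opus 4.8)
Here is how I would prove Lemma~\ref{seq}.

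The plan is to argue by induction on $n$, carrying along the two auxiliary functions $E(\tau):=\exp_{q,a}(\tau)\in(0,1)$ and $u(\tau):=-a\tau$, where $u>0$ on $\ln_{q,a}(0,1)$ (by the description of $\ln_{q,a}(0,1)$ recorded above, $\tau$ has the sign of $-a$). Two elementary identities drive everything. First, from \eqref{deriv} and the inverse function theorem, $\frac{d}{d\tau}E=\chi_{q,a}(E)=E^{q}\,(-\ln_q E)^{1-a}$; since $\ln_{q,a}(E)=\tau$ forces $(-\ln_q E)^{a}=u$, hence $-\ln_q E=u^{1/a}$, this becomes
\[
\frac{d}{d\tau}E=E^{q}\,u^{(1-a)/a},\qquad \frac{d}{d\tau}u=-a .
\]
Second, substituting $-\ln_q E=u^{1/a}$ into the explicit expression for $\ln_q$ gives the algebraic constraint $E^{\,1-q}=1+(q-1)\,u^{1/a}$ when $q\neq1$ (and $\log E=-u^{1/a}$ when $q=1$). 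The case $n=1$ of the Lemma is then exactly the first displayed identity, which forces $b_0^1=1$.

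For the inductive step I would differentiate the asserted expression for $\frac{d^{n}}{d\tau^{n}}E$, namely $E^{\alpha_n}u^{n(1-a)/a}\sum_{j=0}^{n-1}b_j^n u^{-j/a}$ with $\alpha_n:=(n-1)(q-1)+q=n(q-1)+1$, by the product rule. Differentiating the factor $E^{\alpha_n}$ produces, via the ODE, a term carrying $E^{\alpha_n+q-1}=E^{\alpha_{n+1}}$ with the exponent of $u$ raised by $(1-a)/a$; differentiating the $u$-powers produces, via $u'=-a$, terms still carrying $E^{\alpha_n}$ but with the exponent of $u$ lowered by $1/a$ (one checks $\tfrac{n(1-a)-j}{a}-1=\tfrac{(n+1)(1-a)-(j+1)}{a}$). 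The two powers of $E$ are reconciled by inserting $E^{\alpha_n}=E^{\alpha_{n+1}}E^{1-q}=E^{\alpha_{n+1}}\bigl(1+(q-1)u^{1/a}\bigr)$, after which every term acquires the common prefactor $E^{\alpha_{n+1}}u^{(n+1)(1-a)/a}$, as required. What remains is bookkeeping: reindex the three resulting sums so that each runs over $u^{-j/a}$ with $0\leq j\leq n$, read off the coefficient of $u^{-j/a}$, and simplify using $\alpha_n-(q-1)\bigl(n(1-a)-j\bigr)=(na+j)(q-1)+1$. One then finds that this coefficient is precisely the claimed $b_j^{n+1}$: for $j=0$ only the $E^{\alpha_n}$-differentiation and the ``$+1$''-part of the constraint contribute, giving $\{na(q-1)+1\}b_0^n$; for $1\leq j\leq n-1$ one picks up in addition the shifted contribution $-\{n(1-a)-(j-1)\}b_{j-1}^n$; and for $j=n$ only that shifted contribution survives, equal to $(na-1)b_{n-1}^n$. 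The value $q=1$ needs only a one-line remark: there $\alpha_n\equiv1$ and every $(q-1)$-term vanishes, so the computation goes through with $E^{\alpha_n}=E^{\alpha_{n+1}}$ directly, with no appeal to the constraint.

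The main obstacle is exactly this mismatch of exponents of $E$: the product rule naively yields terms with two different powers of $E$, whereas the formula in the statement is a single monomial in $E$. It is resolved only because the constraint $E^{1-q}=1+(q-1)u^{1/a}$ is available to collapse the surplus power of $E$ into a degree-one polynomial in $u^{1/a}$; recognizing that this identity must be invoked, and then matching the shifted powers of $u$ correctly after reindexing, is the single delicate point, everything else being routine differentiation and relabelling.
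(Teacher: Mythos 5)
Your proof is correct and follows essentially the same route as the paper: establish the case $n=1$ from $\frac{d}{d\tau}\exp_{q,a}=\chi_{q,a}(\exp_{q,a})$ together with $(-\ln_q \exp_{q,a}(\tau))^a=-a\tau$, then induct via the product rule and reconcile the mismatched powers of $\exp_{q,a}$ by inserting $\exp_{q,a}(\tau)^{1-q}=1-(1-q)(-a\tau)^{1/a}$ before collecting the coefficients of $(-a\tau)^{-j/a}$, exactly as in the paper. (One prose slip, with no effect on the result: at $j=0$ it is the $(q-1)u^{1/a}$-part of the constraint, not the ``$+1$''-part, that combines with the $E^{\alpha_n}$-differentiation term to give $\{na(q-1)+1\}b_0^n$, as your own identity $\alpha_n-(q-1)(n(1-a)-j)=(na+j)(q-1)+1$ at $j=0$ shows.)
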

\begin{proof}
We observe that 
\begin{align*}
\frac{d}{d\tau}\exp_{q,{a}}(\tau)
&=
\chi_{q,{a}}\left(\exp_{q,{a}}(\tau)\right)\\
&=\chi_q\left(\exp_{q,{a}}(\tau) \right) \cdot \left\{-\ln_q \left(\exp_{q,{a}}(\tau)\right)\right\}^{1-{a}}\\
&= \exp_{q,{a}}(\tau)^q \cdot (-{a} \tau)^{\frac{1-{a}}{a}},
\end{align*}
where we used Equation \eqref{ea}.
Thus the lemma holds for $n=1$.

If the lemma holds for $n$, then we compute that 
\begin{align*}
&\frac{d^{n+1}}{d\tau^{n+1}}\exp_{q,{a}}(\tau)\\
 =&
\frac{d}{d\tau}
\left(   \exp_{q,{a}}(\tau)^{(n-1)(q-1)+q} (-{a} \tau)^{\frac{n(1-{a})}{{a}}} \sum_{j=0}^{n-1} b_j^n \cdot (-{a} \tau)^{-\frac{j}{a}} \right)\\
=&
\left( \frac{d}{d\tau}   \exp_{q,{a}}(\tau)^{(n-1)(q-1)+q}  \right) \times (-{a} \tau)^{\frac{n(1-{a})}{{a}}} \sum_{j=0}^{n-1} b_j^n \cdot (-{a} \tau)^{-\frac{j}{a}} \\
&
+
 \exp_{q,{a}}(\tau)^{(n-1)(q-1)+q} \times
\frac{d}{d\tau} \left(  (-{a} \tau)^{\frac{n(1-{a})}{{a}}}  \sum_{j=0}^{n-1} b_j^n \cdot (-{a} \tau)^{-\frac{j}{a}} \right)\\
=&
\left\{ {(n-1)(q-1)+q}\right\}  \exp_{q,{a}}(\tau)^{(n-1)(q-1)+q-1}\cdot   \exp_{q,{a}}(\tau)^q  (-{a} \tau)^{\frac{1-{a}}{{a}}}  \\
&\qquad \times (-{a} \tau)^{\frac{n(1-{a})}{{a}}} \sum_{j=0}^{n-1} b_j^n \cdot (-{a} \tau)^{-\frac{j}{a}} \\
&
+
 \exp_{q,{a}}(\tau)^{(n-1)(q-1)+q} \times
\left\{ -{a} \sum_{j=0}^{n-1} \frac{ n(1-{a})-j}{a} b_j^n \cdot (-{a} \tau)^{\frac{ n(1-{a})-j}{a}-1} \right\}\\
=& \exp_{q,{a}}(\tau)^{n(q-1)+q} (-{a} \tau)^{\frac{(n+1)(1-{a})}{{a}}} \\
&  \times \ 
\left[
\{(n-1)(q-1)+q\} \sum_{j=0}^{n-1} b_j^n (-{a} \tau)^{-\frac{j}{a}} \right. \\
& \qquad\qquad 
    \left. -\exp_{q,{a}}(\tau)^{1-q}\sum_{j=0}^{n-1}\left\{n(1-{a})-j\right\}b_j^n (-{a} \tau)^{-\frac{j+1}{a}}\right].
\end{align*}
We deduce from 
$\exp_{q,{a}}(\tau)^{1-q} = 1-(1-q)(-{a} \tau)^{\frac1{{a}}}$ that
\begin{align*}
&\exp_{q,{a}}(\tau)^{1-q}
\sum_{j=0}^{n-1}
\left\{n(1-{a})-j\right\}b_j^n \cdot (-{a} \tau)^{-\frac{j+1}{a}}\\
&=
\sum_{j=0}^{n-1}
\left\{n(1-{a})-j\right\}b_j^n \cdot (-{a} \tau)^{-\frac{j+1}{a}}
-(1-q)\sum_{j=0}^{n-1}\left\{n(1-{a})-j\right\}b_j^n \cdot (-{a} \tau)^{-\frac{j}{a}}.
\end{align*}
This completes the proof of the lemma. 
\qquad$\qed$
\end{proof}
\begin{remark}\label{example}
For  $q\in \mathbb{R}$ and ${a}\in \mathbb{R}\setminus\{0\}$, we have that
\begin{align*}
&b_0^1=1, &&b_0^2={a}(q-1)+1, &&b_0^3=\{2{a}(q-1)+1\}\{{a}(q-1)+1\},\\
&\quad    &&b_1^2={a}-1, && b_1^3=({a}-1)\{(4{a}+1)(q-1)+3\},\\
&\quad&&\quad&&b_2^3=({a}-1)(2{a}-1).
\end{align*}
\end{remark}
\begin{corollary}\label{q=1}
For  ${a}\in \mathbb{R} \setminus\{0\}$ and  $n\in \N$, then $b_0^n(1,{a})=1$.
\end{corollary}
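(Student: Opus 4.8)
The plan is to read off the claim directly from the recursion for $\{b^n_j\}$ established in Lemma~\ref{seq}, specialized to $q=1$. The key observation is that the coefficient governing the $j=0$ entry, namely $n{a}(q-1)+1$, collapses to $1$ when $q=1$, so the sequence $\{b_0^n(1,{a})\}_{n\in\N}$ is constant.

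Concretely, I would argue by induction on $n$. The base case is $b_0^1(1,{a})=1$, which holds for every ${a}\in\mathbb{R}\setminus\{0\}$ by the initial condition $b_0^1=1$ in Lemma~\ref{seq}. For the inductive step, suppose $b_0^n(1,{a})=1$. Applying the $j=0$ case of the recursion in Lemma~\ref{seq} with $q=1$ gives
\[
b_0^{n+1}(1,{a})=\{n{a}(1-1)+1\}\,b_0^n(1,{a})=b_0^n(1,{a})=1,
\]
which closes the induction and proves the corollary.

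There is essentially no obstacle here: the only point to be careful about is that one invokes the correct branch of the piecewise recursion (the $j=0$ line, valid for all $n$), and that the factor $q-1$ vanishes identically at $q=1$ regardless of the value of ${a}$, so the argument is uniform in ${a}\in\mathbb{R}\setminus\{0\}$. One may also remark, as a sanity check, that this is consistent with the explicit values recorded in Remark~\ref{example}, where setting $q=1$ yields $b_0^2={a}\cdot 0+1=1$ and $b_0^3=\{2{a}\cdot 0+1\}\{{a}\cdot 0+1\}=1$.
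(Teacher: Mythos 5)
Your proof is correct and matches the paper's argument exactly: both use the $j=0$ branch of the recursion from Lemma~\ref{seq}, note that the factor $n{a}(q-1)+1$ equals $1$ at $q=1$, and conclude by induction from $b_0^1=1$. Nothing is missing.
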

\begin{proof}
It follows from Lemma \ref{seq} that
\begin{align*}
b_0^{n+1}(1,{a}) 
=\{n{a}(1-1)+1\}b_0^n(1,{a}) 
= b_0^n(1,{a})=\cdots 
=b^1_0(1,{a})=1.
\qquad\qed
\end{align*}
\end{proof}
\begin{corollary}\label{a=1}
Let  $q\in \mathbb{R}$ and $n\in\mathbb{N}$.
For  $1\leq j<n$,  then $b_j^n(q,1)=0$.
\end{corollary}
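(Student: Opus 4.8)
The plan is to prove the statement by induction on $n$, using the recursion for $\{b_j^n\}$ from Lemma \ref{seq} after setting $a=1$. At $a=1$ the factor $n(1-a)$ in the middle branch of the recursion vanishes and $na-1$ in the top branch becomes $n-1$, so the recursion reduces to
\[
b_0^{n+1}(q,1)=\{n(q-1)+1\}b_0^n(q,1),\qquad b_n^{n+1}(q,1)=(n-1)b_{n-1}^n(q,1),
\]
together with $b_j^{n+1}(q,1)=\{(n+j)(q-1)+1\}b_j^n(q,1)+(j-1)b_{j-1}^n(q,1)$ for $1\le j\le n-1$. The structural point is that in every branch producing an index $j\ge1$, the only term that could carry the seed $b_0$ is a multiple of $b_{j-1}^n(q,1)$ whose coefficient $j-1$ vanishes exactly when $j=1$; hence the unique nonzero seed $b_0^n(q,1)=1$ (Corollary \ref{q=1}) is never propagated into the range $1\le j<n$.

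For the base case $n=1$ the index set $\{j:1\le j<1\}$ is empty. For the inductive step, assume $b_j^n(q,1)=0$ for all $1\le j<n$ and fix $j$ with $1\le j\le n$. If $j=1$ (and $n\ge2$) the recursion gives $b_1^{n+1}(q,1)=\{(n+1)(q-1)+1\}b_1^n(q,1)$ since the feeding coefficient $j-1$ is zero, and $b_1^n(q,1)=0$ by hypothesis; for $n=1$ one has directly $b_1^2(q,1)=(1-1)b_0^1=0$ from the top branch. If $2\le j\le n-1$ then both $b_j^n(q,1)$ and $b_{j-1}^n(q,1)$ vanish by hypothesis, so $b_j^{n+1}(q,1)=0$. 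If $j=n$ (and $n\ge2$) then $b_{n-1}^n(q,1)=0$ by hypothesis, whence $b_n^{n+1}(q,1)=(n-1)\cdot 0=0$. This establishes $b_j^{n+1}(q,1)=0$ for all $1\le j\le(n+1)-1$, closing the induction.

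The only delicate part is the bookkeeping in the case split on $j$: one must isolate $j=1$, where it is essential that the coefficient $j-1=0$ suppresses the contribution of $b_0^n(q,1)$, and isolate the boundary index $j=n$, which is governed by the top branch of the Lemma \ref{seq} recursion rather than the interior one, together with a check of the smallest values of $n$ so that the interior range $1\le j\le n-1$ is nonempty before it is invoked. No analytic input is needed; the whole argument is a finite, essentially combinatorial, verification.
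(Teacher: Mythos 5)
Your proof is correct and takes essentially the same route as the paper: induction on $n$ using the $a=1$ specialization of the recursion in Lemma \ref{seq}, with the same case split $j=1$, $2\le j\le n-1$, $j=n$ (the paper seeds the induction with $b_1^2(q,1)=0$, i.e.\ $b_1^2={a}-1$ from Remark \ref{example}, which is exactly your $j=n=1$ computation). One cosmetic slip in your motivating paragraph: $b_0^n(q,1)$ is not $1$ in general (e.g.\ $b_0^2(q,1)=q$), and Corollary \ref{q=1} concerns $q=1$ rather than $a=1$; this does not affect the argument, since the coefficient $j-1$ annihilates the $b_0^n$ contribution at $j=1$ regardless of its value.
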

\begin{proof}
This holds for $1=j<n=2$ by Remark \ref{example}.
For $n \geq 2$,
if $b_j^n(q,1)=0$ holds for  $1\leq j \leq n-1$,
then Lemma \ref{seq} implies
that $b_n^{n+1}(q,1)=(n{a}-1)b_{n-1}^{n}(q,1)=0$.
For $2 \leq j \leq n-1$, we have that
\begin{align*}
b_j^{n+1}(q,1)
&=
\{(n+j)(q-1)+1\}b_j^n(q,1)+(j-1)b_{j-1}^n(q,1)=0
\end{align*}
by the assumption $b^n_k(q,1)=0$  for $1 \leq k \leq n-1$.
For $j=1$, we have that  
\begin{align*}
b_1^{n+1}(q,1)
&=
\{(n+1)(q-1)+1\}b_1^n(q,1)+(1-1)b_{0}^n(q,1)=0.
\qquad
 \qed
\end{align*}
\end{proof} 

\section{Escort expectations}
The ordinary expectation of a random variable  is  the integral of the random variable  with respect to its law.
An introduction to escort expectations admits us to replace  the law  to any other measures.
The escort expectation with respect to a probability measure was first introduced by Naudts~\cite{Na-2004}.
\begin{definition}
For a measure $\nu$ on a measurable space $\Omega$, 
 the \emph{escort expectation} of  a function $f \in L^1(\nu)$ with respect to $\nu$ is defined by 
\[
\mathbb{E}_{\nu}[f]:=\int_{\Omega} f(\omega)  d\nu(\omega).
\]
\end{definition}

In this section, we fix a manifold $\mathcal{S}$ consisting of positive probability densities on a measure space $(\Omega, \nu)$.
We assume that  $\mathcal{S}$ is homeomorphic to an open set $\Xi$ in $\mathbb{R}^d$ and we denote each element in $\mathcal{S}$ by $p(\cdot;\xi)$ for $\xi \in \Xi$.
Namely,
\[
\mathcal{S}=\left\{  p(\cdot;\xi): \Omega \to  (0,\infty) \ \Big| \  \int_{\Omega}  p(\omega;\xi)d \nu(\omega) =1, \ \xi \in \Xi\right\}.
\]
We moreover require  that $\mathcal{S}$ satisfies  a certain regularity condition to define information geometric structures  via escort expectations.
For  the regularity condition, we refer to~\cite[Chapter 2]{AN-book}.

\begin{remark} \label{stat}
One  of   manifolds  consisting of probability densities on a measure space satisfying the regular condition 
is  a $q$-exponential family, which is a generalization to the space of $q$-Gaussian densities over $\mathbb{R}$ for $1\leq q<3$.
\end{remark}

Take $c\in(0,\infty]$ such that
\[
c > \sup\{ p(\omega)  \ |\ p\in \mathcal{S},\ \omega \in \Omega \}
\]
if  the above supremum is finite, otherwise $c:=\infty$.

\begin{definition}
Let $\ell: (0,c) \to \mathbb{R}$ be a differentiable function such that $\ell'>0$ in~$(0,c)$.
For $p\in \mathcal{S}$,
we  define a measure $\nu_{\ell;p}$ on $\Omega$ as the absolutely continuous measure with respect to $\nu$ with Radon--Nikodym derivative
\[
\frac{d\nu_{\ell;p}}{d\nu}(\omega)=\frac{1}{\ell'(p(\omega))}.
\]
\end{definition}
Note that $\ell$ is often assumed to be concave  such as the logarithmic function.
In the case $\ell=\log$, we have that 
\[
\frac{d\nu_{\ell;p}}{d\nu}=p.
\]

\begin{definition}\label{oridef}
Fix a  differentiable function $\ell: (0,c) \to \mathbb{R}$  such that $\ell'> 0$ in~$(0,c)$  and assume  that 
\begin{equation}\label{escortcondi}
\ell(r)=\ell  \circ r  \in L^1(\nu_{\ell;p} )\qquad \text{for\ } p, r \in \mathcal{S}.
\end{equation}
\begin{enumerate}
\renewcommand{\labelenumi}{$(\arabic{enumi})$}
\item
For $p, r \in \mathcal{S}$,
the {\em $\ell$-cross entropy} of $p$ with respect to $r$ is defined by
\[
d_{\ell}(p, r) 
:=- \mathbb{E}_{ \nu_{\ell;p}}[\ell  (r) ].
\]
\item
The \emph{$\ell$-entropy} of $p \in \mathcal{S}$ is defined by 
\[
\mathrm{Ent}_\ell(p):=d_{\ell}(p,p).
\]
\item
For $p, r \in \mathcal{S}$,  the {\em  $\ell$-relative entropy}  of $p$ with respect to $r$ is defined by 
\[
D^{(\ell)}(p,r):=-d_{\ell}(p,p)+d_{\ell}(p,r).
\]
\end{enumerate}
\end{definition}
\begin{remark}
In general,  the $\ell$-entropy does not satisfy 
nonextensive Shannon--Khinchin axioms~\cite{Su-2004}.
However, if  $\mathcal{S}$ is a manifold of all Gaussian densities over Euclidean space and $\ell=\log$, 
then the $\ell$-entropy coincides with the Boltzmann--Shannon entropy. 
\end{remark}

A choice of  differentiable functions $\ell: (0,c) \to \mathbb{R}$ such that $\ell'> 0$ in $(0,c)$ determines  an entropy and a relative entropy on $\mathcal{S}$. 
However, the converse is not  true.
This phenomenon is related to  {\it gauge freedom}, which was proposed by Zhang and Naudts \cite{ZN-2017} (see also \cite{NZ-2018}).

In the next section,
we demonstrate  {\it gauge freedom of entropies}  on an open set of $q$-Gaussian densities over $\mathbb{R}$ for $1\leq q <3$.
To be precise, we show that different escort expectations  determine the same entropy up to scalar multiple, but different relative entropies,
where  the entropy satisfies nonextensive Shannon--Khinchin axioms.

\section{Gauge freedom of Entropies}
\subsection{$q$-Gaussian measures}
To define $q$-Gaussian measures, 
we extend  $\exp_q$ to the whole of $\mathbb{R}$ by
\[
\mathrm{Rexp}_{q}(\tau):=\max\{0, 1+(1-q) \tau \}^{\frac{1}{1-q}}
\qquad
\text{for\ }\tau \in\mathbb{R},
\]
where by convention $0^c:=\infty$ for $c<0$.
We recall  the following improper integral. 
\begin{lemma}
For $q\in \mathbb{R}$ and $(\mu, \lambda )\in \mathbb{R} \times (0,\infty)$, the improper integral of the function
\[
 x \mapsto \mathrm{Rexp}_q(-\lambda (x-\mu)^2)
\]
on $\mathbb{R}$ converges if and only if $q<3$.
For $q<3$, 
\begin{align*}
\sqrt{3-q} \int_{\mathbb{R}} \mathrm{Rexp}_q(-x^2)dx = Z_q:=
\begin{cases}
\displaystyle \sqrt{\frac{{3-q}}{{q-1}}}B\left(\frac{3-q}{2(q-1)}  ,\frac12\right)&\text{if\ }q>1,\\
 \sqrt{2\pi} & \text{if\ }q=1,\\ 
 \displaystyle\sqrt{\frac{{3-q}}{{1-q}}}B\left(\frac{2-q}{1-q},\frac12\right)&\text{if\ }q<1,
 \end{cases}
\end{align*}
where $B(\cdot, \cdot)$ stands for the beta function.
\end{lemma}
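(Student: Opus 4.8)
The plan is to reduce the integral to a standard beta-function integral by a change of variables, splitting into the cases $q<1$, $q=1$, and $q>1$, and to establish the convergence claim along the way from the known asymptotics of the integrand.

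\textbf{Reduction and convergence.} First I would note that by translation invariance of Lebesgue measure it suffices to treat $\mu=0$, and then by the scaling $x\mapsto x/\sqrt{\lambda}$ it suffices to treat $\lambda=1$; so the question reduces to the convergence and evaluation of $\int_{\mathbb{R}}\mathrm{Rexp}_q(-x^2)\,dx$. The integrand is continuous and nonnegative, and by symmetry equals $2\int_0^\infty\mathrm{Rexp}_q(-x^2)\,dx$. For $q\le 1$ we have $\mathrm{Rexp}_q(-x^2)=\{1+(q-1)x^2\}^{1/(1-q)}$ (with $\mathrm{Rexp}_1=e^{-x^2}$), which for $q<1$ is supported on the bounded interval where $1+(q-1)x^2\ge 0$, hence the integral trivially converges; for $q=1$ it is the Gaussian integral. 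For $q>1$ the integrand is $(1+(q-1)x^2)^{-1/(q-1)}$, which decays like $x^{-2/(q-1)}$ as $x\to\infty$; this is integrable at infinity exactly when $2/(q-1)>1$, i.e.\ $q<3$, and it diverges for $q\ge 3$. This proves the convergence dichotomy.

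\textbf{Evaluation via the beta integral.} For the explicit value I would use the identity $\int_0^\infty t^{s-1}(1+t)^{-(s+u)}\,dt=B(s,u)$. In the case $q>1$, substitute $t=(q-1)x^2$, so $x=\sqrt{t/(q-1)}$ and $dx=\tfrac12 (q-1)^{-1/2} t^{-1/2}\,dt$; then
\[
\int_0^\infty (1+(q-1)x^2)^{-\frac1{q-1}}\,dx
=\frac{1}{2\sqrt{q-1}}\int_0^\infty t^{-\frac12}(1+t)^{-\frac1{q-1}}\,dt
=\frac{1}{2\sqrt{q-1}}\,B\!\left(\frac12,\frac1{q-1}-\frac12\right),
\]
and $\tfrac1{q-1}-\tfrac12=\tfrac{3-q}{2(q-1)}$. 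Multiplying by $2$ and then by $\sqrt{3-q}$ gives $\sqrt{\tfrac{3-q}{q-1}}\,B\!\left(\tfrac{3-q}{2(q-1)},\tfrac12\right)$, using the symmetry $B(x,y)=B(y,x)$. The case $q<1$ is handled identically with $t=(1-q)x^2$ on the interval $0\le x\le (1-q)^{-1/2}$, which corresponds to $0\le t\le 1$: one gets $\int_0^1 t^{-1/2}(1-t)^{1/(1-q)}\,dt$, and the substitution $u=1-t$ (or recognizing it directly) identifies this with $B\!\left(\tfrac12,\tfrac1{1-q}+1\right)=B\!\left(\tfrac12,\tfrac{2-q}{1-q}\right)$; assembling the constants yields $\sqrt{\tfrac{3-q}{1-q}}\,B\!\left(\tfrac{2-q}{1-q},\tfrac12\right)$. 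The case $q=1$ is the classical $\int_{\mathbb{R}}e^{-x^2}\,dx=\sqrt{\pi}$, so $\sqrt{3-q}\int_{\mathbb{R}}\mathrm{Rexp}_1(-x^2)\,dx=\sqrt{2}\cdot\sqrt{\pi}=\sqrt{2\pi}$.

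\textbf{Main obstacle.} The content is entirely routine; the only point requiring care is bookkeeping of the exponents and of which beta-function arguments are finite (positivity of both arguments, i.e.\ $\tfrac{3-q}{2(q-1)}>0$ needs $q<3$ again in the $q>1$ case, and $\tfrac{2-q}{1-q}>0$ is automatic for $q<1$), together with checking that the boundary behavior of the integrand at the finite endpoint $x=(1-q)^{-1/2}$ in the case $q<1$ is integrable — it behaves like a power $(1-q)^{1/(1-q)}$-type factor which is harmless since $1/(1-q)>0$. Matching the normalization constant $\sqrt{3-q}$ on the left with the $\sqrt{(3-q)/(q\mp 1)}$ prefactors on the right is just algebra.
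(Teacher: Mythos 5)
Your proposal is correct and follows essentially the same route as the paper: reduce to $(\mu,\lambda)=(0,1)$ by translation and scaling, get the convergence dichotomy from the power-type decay $x^{2/(1-q)}$ for $q>1$ (with the $q<1$ case trivial by compact support and $q=1$ the Gaussian), and evaluate by the substitution $t=(q\mp1)x^2$ reducing to the standard beta integrals $\int_0^\infty t^{s-1}(1+t)^{-(s+u)}dt$ and $\int_0^1 t^{a-1}(1-t)^{b-1}dt$. The bookkeeping of exponents and the final matching with $Z_q$ is the same as in the paper's computation.
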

\begin{proof}
By the change of variables, it is enough to show the case $(\mu, \lambda )=(0,1)$.
We omit the proof for the case $q=1$, which is well-known.

Assume $q\neq 1$.
There exist $c,C, R>0$ depending on $q$ such that 
\begin{align*}
c x^{\frac{2}{1-q}} \leq \mathrm{Rexp}_q(-x^2)=\left\{ 1-(1-q) x^2 \right\}^{\frac1{1-q}} <C x^{\frac{2}{1-q}}
\end{align*}
for $x>R$.
Since   the improper integral of the function
\[
 x \mapsto  x^{\frac{2}{1-q}}
\]
on $[1,\infty)$ converges if and only if $2/(1-q)<-1$, that is $q<3$, so does  the improper integral of the function
$ x \mapsto \mathrm{Rexp}_q(-x^2)$ on $\mathbb{R}$.

For $1<q<3$, we observe that 
\begin{align*}
\int_{\mathbb{R}} \mathrm{Rexp}_q(-x^2)dx 
&=
2\int_0^{\infty} \left\{1-(1-q)x^2\right\}^{\frac{1}{1-q}} dx\\
&=
\frac{1}{\sqrt{q-1}}\int_0^{\infty} (1+r)^{\frac{1}{1-q}}r^{-\frac12} dr\\
&=\frac{1}{\sqrt{q-1}}
B\left(\frac{3-q}{2(q-1)}  ,\frac12\right),
\end{align*}
where we used that 
\begin{align*}
B(s-t,t)=\int_{0}^\infty \frac{r^{s-1}}{(1+r)^{t}}dr\qquad\text{for \ }s>t>0.
\end{align*}

In the case $q<1$,  the support of the function $x \mapsto \mathrm{Rexp}_q(-x^2)$ on $\mathbb{R}$ is 
\[
\left[ -\frac{1}{\sqrt{1-q}}, \frac{1}{\sqrt{1-q}} \right]
\]
implying that 
\begin{align*}
\int_{\mathbb{R}} \mathrm{Rexp}_q(-x^2)dx 
&=
2\int_0^{\frac1{\sqrt{(1-q)}}} \left[1-(1-q)x^2\right]^{\frac{1}{1-q}} dx\\
&=
\frac{1}{\sqrt{1-q}}\int_0^{1} \left[1-r\right]^{\frac{1}{1-q}}r^{-\frac12} dx\\
&=\frac{1}{\sqrt{1-q}}B\left(\frac{2-q}{1-q},\frac12\right).
\qquad  \qed
\end{align*}
\end{proof}

\begin{definition}
For $q<3$ and  $\xi=(\mu,\sigma) \in \mathbb{R}\times(0,\infty)$, the \emph{$q$-Gaussian measure} with location parameter $\mu$
and scale parameter $\sigma$ on $\mathbb{R}$ is an absolutely continuous probability measure with respect to the one-dimensional Lebesgue measure with Radon--Nikodym derivative
\[
p_{q}(x;\xi)=
p_{q}(x;\mu, \sigma):=
\frac{1}{Z_q \sigma}\mathrm{Rexp}_q\left(-\frac1{3-q}\left(\frac{x-\mu}{\sigma}\right)^2 \right).
\]
We call  $p_{q}(x;\xi)=p_{q}(x;\mu, \sigma)$ the {\em $q$-Gaussian density} with {location parameter} $\mu$ and scale parameter $\sigma$.
\end{definition}

%
A $q$-Gaussian density   corresponds to a \emph{normal $($Gaussian$)$distribution} for $q=1$,  and a \emph{Student $t$-distribution} for $1<q<3$.
In the both cases, the support of each $q$-Gaussian measure is the whole of $\mathbb{R}$ and 
\[
p_{q}(x;\xi)=
p_{q}(x;\mu, \sigma)=
\frac{1}{Z_q \sigma}\exp_q\left(-\frac1{3-q}\left(\frac{x-\mu}{\sigma}\right)^2 \right).
\]
The set of all $q$-Gaussian densities   satisfies the regularity condition to define  information geometric structures.
For example, see~\cite{MW-2015}.

\subsection{Sufficient conditions for \eqref{escortcondi}}
In order to give a rigorous treatment of an escort expectation  associated to the ${a}$-refined $q$-logarithmic function, we only deal with the case  $1\leq q<3$.
Set
\[
\Sigma_{q}:=\left\{\sigma>0\  \Big|\  \frac{1}{Z_q\sigma}  < 1\right\},\qquad
\mathcal{S}_q:=\{p_q(\cdot;\xi)\ \ |\  \xi \in \mathbb{R}\times \Sigma_q\}.
\]
%
It holds for $\sigma \in \Sigma_{q}, p\in \mathcal{S}_q$ and $x\in \mathbb{R}$ that 
\[
\ln_q\left( \frac1{Z_q \sigma} \right)< \ln_q(1)=0, \qquad
\ln_q\left(p(x)\right)\in (-\infty, 0).
\]
\begin{definition}
For $1\leq q<3$ and $\xi \in\mathbb{R}\times \Sigma_{q}$, 
define $\ell_q(\cdot;\xi):\mathbb{R}\to (-\infty,0)$  by 
\begin{align*}
\ell_q(x;\xi):=\ln_{q}\left( p_q(x;\xi)\right),
\end{align*}
which is called the {\em $q$-likelihood function} of $p_q(\cdot;\xi)$.
\newpage

For $1\leq q<3,  {a}\in \mathbb{R}\setminus\{0\}$ and $\xi \in\mathbb{R}\times \Sigma_{q}$, we  define a measure $\nu_{q,{a};\xi}$ on $\mathbb{R}$
as the absolutely continuous  measure with respect to the one-dimensional Lebesgue measure with Radon--Nikodym derivative
\[
\frac{d\nu_{q,{a};\xi}}{dx}(x)=
\frac1{\ln_{q,{a}}'\left(p_q(x;\xi) \right)}.
\]
\end{definition}
Since  the inverse function of $\ln_{q,a}$ is $\exp_{q,a}$,  Lemma \ref{seq} in the case $n=1$ leads to
\begin{align*}
\frac{d\nu_{q,{a};\xi}}{dx}(x)=\exp_{q,{a}}'(\ln_{q,{a}}(p_q(x;\xi)) )&=\left(- \ell_q(x;\xi )\right)^{1-{a}} \chi_q (p_q( x;\xi )).
\end{align*}
A direct computation leads to the relation that 
\begin{equation}
\label{eq:sub}
\begin{split}
\ell_q(x;\xi)& =\ln_q\left( \frac1{Z_q \sigma} \right)-\frac{1}{(Z_q\sigma)^{1-q}(3-q)}\left(\frac{x-\mu}{\sigma}\right)^2,\\
\ln_{q,{a}}( p_q(x;\xi))&=-\frac1{{a}}\left(-\ell_q(x;\xi)\right)^{{a}}.
\end{split}
\end{equation}
\begin{lemma}\label{l2}
Let    $1\leq q <3, {a} \in \mathbb{R}\setminus\{0\}$ and $\xi\in \mathbb{R} \times \Sigma_q$.
Then  for $\lambda,\gamma\in\mathbb{R}$ with $\lambda>0$, $(\lambda+ x^2)^{\gamma} \in L^1(\nu_{q,{a};\xi})$ if and only if
\[
\text{either\ } q=1\quad \text{or }\quad q>1\text{\ with\ } \gamma < \frac12 +\frac{1}{q-1}+{a}-1.
\]
\end{lemma}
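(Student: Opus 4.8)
The plan is to reduce the claim to the integrability of the relevant integrand near $x=\pm\infty$ and then run a two-sided asymptotic comparison, in the spirit of the convergence lemma for $x\mapsto \mathrm{Rexp}_q(-\lambda(x-\mu)^2)$ proved above. First I would record, using the formula for $d\nu_{q,{a};\xi}/dx$ obtained from Lemma~\ref{seq} with $n=1$, that
\[
(\lambda+x^2)^{\gamma}\,\frac{d\nu_{q,{a};\xi}}{dx}(x)=(\lambda+x^2)^{\gamma}\,\big(-\ell_q(x;\xi)\big)^{1-{a}}\,p_q(x;\xi)^{q}.
\]
Since $\sigma\in\Sigma_q$ forces $\ln_q(p_q(x;\xi))<0$ for all $x$, the first line of \eqref{eq:sub} shows that $-\ell_q(\cdot;\xi)$ is continuous, strictly positive, and bounded below by the positive constant $-\ln_q(1/(Z_q\sigma))$; as $\lambda>0$ and $p_q(\cdot;\xi)$ is continuous and positive, the above integrand is continuous on all of $\mathbb{R}$. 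Hence $(\lambda+x^2)^{\gamma}$ belongs to $L^1(\nu_{q,{a};\xi})$ if and only if this integrand is Lebesgue-integrable on $\{\,|x|>R\,\}$ for one, equivalently every, $R>0$.

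In the case $q=1$ the density $p_1(\cdot;\xi)$ is Gaussian, so by \eqref{eq:sub} the quantity $-\ell_1(x;\xi)$ grows like a constant times $x^2$ while $p_1(x;\xi)$ decays like $e^{-cx^2}$; hence near infinity the integrand is dominated by a constant multiple of $|x|^{2\gamma+2(1-{a})}\,e^{-cx^2}$, which is integrable for every $\gamma\in\mathbb{R}$. Thus the condition ``$q=1$'' always yields integrability.

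For $1<q<3$ I would extract the leading power of each factor as $|x|\to\infty$. The second line of \eqref{eq:sub} gives $-\ell_q(x;\xi)=A+B(x-\mu)^2$ with constants $A,B>0$, so $\big(-\ell_q(x;\xi)\big)^{1-{a}}$ is comparable to $|x|^{2(1-{a})}$; writing $p_q$ through $\exp_q$ and expanding the base $1+\tfrac{q-1}{3-q}\big(\tfrac{x-\mu}{\sigma}\big)^2$ shows $p_q(x;\xi)$ is comparable to $|x|^{2/(1-q)}$, hence $p_q(x;\xi)^q$ to $|x|^{2q/(1-q)}$, where the exponent is negative; and $(\lambda+x^2)^{\gamma}$ is comparable to $|x|^{2\gamma}$. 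Multiplying, for $|x|>R$ the integrand is bounded above and below by constant multiples of $|x|^{\beta}$ with $\beta:=2\gamma+2(1-{a})+2q/(1-q)$. Since $\int_R^{\infty}|x|^{\beta}\,dx$ converges if and only if $\beta<-1$, the integral converges if and only if $2\gamma+2(1-{a})+2q/(1-q)<-1$; rearranging with $\tfrac{q}{q-1}=1+\tfrac{1}{q-1}$ turns this into $\gamma<\tfrac12+\tfrac{1}{q-1}+{a}-1$, as claimed.

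There is no serious obstacle here: the content is exactly the three elementary asymptotic comparisons together with the standard power test at infinity. The only points needing care are making each comparison two-sided so that both directions of the equivalence follow (precisely as in the $\mathrm{Rexp}_q$ lemma), noting that $2/(1-q)<0$ for $q>1$ so that $p_q$ genuinely decays, and treating the threshold correctly since $\beta=-1$ still diverges. The arithmetic identity $-\tfrac12+1+({a}-1)+\tfrac{1}{q-1}=\tfrac12+\tfrac{1}{q-1}+{a}-1$ is what produces the stated bound.
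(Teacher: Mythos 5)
Your proof is correct and follows essentially the same route as the paper: a two-sided power comparison of the integrand $(\lambda+x^2)^{\gamma}\,(-\ell_q(x;\xi))^{1-a}\,p_q(x;\xi)^{q}$ at infinity for $q>1$ (yielding the threshold $2\gamma+2(1-a)+\tfrac{2q}{1-q}<-1$), together with the rapid Gaussian decay for $q=1$. The only cosmetic difference is that the paper first normalizes to $\xi=(0,2/Z_q)$ by a change of variables while you keep general $\xi$, which changes nothing of substance.
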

\begin{proof}
Since the decay rate of $\nu_{1,{a};\xi}$ is $o(\exp(-x^{\varepsilon}))$ as $x\to \infty$  for $\varepsilon<2$,  the lemma  holds for $q=1$.

Assume $q> 1$.
By the change of variables, it is enough to show the case $\xi=(0,2/Z_q)$.
Here we have that $Z_q \sigma=2$.
There exist $c,C,R>0$ depending on~$q$  such that 
\begin{align*}
& c x^{2(1-{a})+\frac{2q}{1-q}+2\gamma}\\
 &<
\left(- \ell_q(x;\xi)\right)^{1-{a}}\cdot \chi_q(p_q(x;\xi)) \cdot  (\lambda+x^2)^{\gamma} \\
&
=
\left\{-\ln_q\left(\frac12 \right)+\frac{1}{ 2^{1-q}(3-q)}\frac{Z_q^2x^2}{4}\right\}^{1-{a}} \cdot \frac1{2^{q}} \left(   1+\frac{q-1}{3-q} \frac{Z_q^2x^2}{4} \right)^{\frac{q}{1-q}} \cdot  (\lambda+x^2)^{\gamma}  \\
 &< 
 C  x^{2(1-{a})+\frac{2q}{1-q}+2\gamma}
 \end{align*}
for $x> R$.
This means  that $(c+x^2)^\gamma \in L^1(\nu_{q,{a};\xi}) $ 
if and only if 
\begin{align*}
2(1-{a})+\frac{2q}{1-q}+2\gamma<-1
\ \Leftrightarrow\ 
\gamma<\frac12+\frac{1}{q-1}+{a}-1.
\qquad
\qed
\end{align*}

\end{proof}

Lemma \ref{l2} in the case $\gamma=0$ provides the condition for $(q,{a})$ such that  $\nu_{q,{a};\xi}$ has a finite mass.
\begin{corollary}
Let  $1\leq q <3, {a} \in \mathbb{R}\setminus\{0\}$ and $\xi\in \mathbb{R} \times \Sigma_q$.
Then $1 \in L^1(\nu_{q,{a};\xi})$ if and only if
\[
\text{either\ } q=1\quad \text{or }\quad q> 1\text{\ with\ } 
\frac12-\frac{1}{q-1}<{a}.
\]
\end{corollary}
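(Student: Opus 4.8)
The plan is to derive this Corollary as the special case $\gamma = 0$ of Lemma~\ref{l2}. Since $1 \in L^1(\nu_{q,{a};\xi})$ is exactly the assertion that the constant function $r \mapsto (\lambda + x^2)^{\gamma}$ with $\gamma = 0$ lies in $L^1(\nu_{q,{a};\xi})$ (the choice of $\lambda > 0$ being irrelevant when $\gamma = 0$), Lemma~\ref{l2} applies verbatim. For $q = 1$ the condition in Lemma~\ref{l2} is unconditionally satisfied, which gives $1 \in L^1(\nu_{1,{a};\xi})$ for all admissible ${a}$ and $\xi$. For $q > 1$, substituting $\gamma = 0$ into the inequality $\gamma < \tfrac12 + \tfrac{1}{q-1} + {a} - 1$ from Lemma~\ref{l2} yields $0 < \tfrac12 + \tfrac{1}{q-1} + {a} - 1$, i.e. $0 < {a} - \tfrac12 + \tfrac{1}{q-1}$, which rearranges to $\tfrac12 - \tfrac{1}{q-1} < {a}$.

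Concretely, I would write a one-line proof: invoke Lemma~\ref{l2} with $\gamma = 0$ and any $\lambda > 0$ (say $\lambda = 1$), note that then $(\lambda + x^2)^{\gamma} \equiv 1$, and read off the stated equivalence by algebraically simplifying the condition. The only arithmetic step is the rearrangement $0 < \tfrac12 + \tfrac{1}{q-1} + {a} - 1 \Leftrightarrow \tfrac12 - \tfrac{1}{q-1} < {a}$, which follows by adding $1 - \tfrac12 - \tfrac{1}{q-1} = \tfrac12 - \tfrac{1}{q-1}$ to both sides, or equivalently by moving $\tfrac12 + \tfrac{1}{q-1} - 1 = \tfrac{1}{q-1} - \tfrac12$ across the inequality.

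There is essentially no obstacle here: the corollary is a direct specialization, and the proof in the excerpt (the line ``Lemma~\ref{l2} in the case $\gamma=0$ provides the condition\dots'') already signals exactly this. The only point requiring a moment's care is confirming that the hypothesis $\lambda > 0$ in Lemma~\ref{l2} does not obstruct the reduction when $\gamma = 0$ — but it does not, since $(\lambda + x^2)^0 = 1$ regardless of $\lambda$, so one may freely pick any positive $\lambda$ to satisfy the hypothesis. Thus the proof is immediate once Lemma~\ref{l2} is in hand.
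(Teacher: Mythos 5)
Your proposal is correct and is exactly the paper's argument: the paper gives no separate proof, only the remark that Lemma~\ref{l2} with $\gamma=0$ yields the finiteness of the mass of $\nu_{q,{a};\xi}$, and your specialization and the rearrangement $0<\tfrac12+\tfrac1{q-1}+{a}-1 \Leftrightarrow \tfrac12-\tfrac1{q-1}<{a}$ are precisely what is intended. Nothing further is needed.
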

\newpage
Note that 
\[
\frac12-\frac{1}{q-1}<0
\qquad 
\text{for\ }1<q<3.
\]

\begin{corollary}
Let $1\leq q <3$ and ${a} \in \mathbb{R}\setminus\{0\}$.
Then  $\ln_{q,{a}}(r)  \in L^1( \nu_{q,{a};\xi})$ for  $\xi \in \mathbb{R} \times \Sigma_q$ and  $r\in \mathcal{S}_q$.
 \end{corollary}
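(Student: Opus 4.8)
The plan is to reduce the claim to Lemma~\ref{l2} with exponent $\gamma=a$. First I would fix $r\in\mathcal{S}_q$ and write $r=p_q(\cdot;\eta)$ for some $\eta=(\mu',\sigma')\in\mathbb{R}\times\Sigma_q$. Then $\ln_{q,a}(r)(x)=\ln_{q,a}(p_q(x;\eta))$ is well defined for every $x\in\mathbb{R}$, because $\sigma'\in\Sigma_q$ forces $p_q(x;\eta)\le p_q(\mu';\eta)=1/(Z_q\sigma')<1$. By the second line of \eqref{eq:sub} we have $|\ln_{q,a}(r)(x)|=\tfrac1{|a|}\,(-\ell_q(x;\eta))^{a}$, and by the first line of \eqref{eq:sub} the function $-\ell_q(\cdot;\eta)$ is a quadratic polynomial, $-\ell_q(x;\eta)=A_\eta+B_\eta(x-\mu')^2$, with $A_\eta=-\ln_q(1/(Z_q\sigma'))>0$ and $B_\eta=\{(Z_q\sigma')^{1-q}(3-q)\sigma'^2\}^{-1}>0$ (here $A_\eta>0$ since $1/(Z_q\sigma')<1$ and $\ln_q$ is increasing with $\ln_q(1)=0$, and $B_\eta>0$ since $q<3$).

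Next I would compare $A_\eta+B_\eta(x-\mu')^2$ with $1+x^2$. Both are quadratics in $x$ with positive leading coefficient and strictly positive values on $\mathbb{R}$, so the ratio $(A_\eta+B_\eta(x-\mu')^2)/(1+x^2)$ is continuous, strictly positive, and tends to $B_\eta$ as $|x|\to\infty$; hence there are constants $0<c_\eta\le C_\eta$ with $c_\eta(1+x^2)\le A_\eta+B_\eta(x-\mu')^2\le C_\eta(1+x^2)$ for all $x$. Raising to the power $a$ (using the upper bound when $a>0$, where $t\mapsto t^a$ increases, and the lower bound when $a<0$, where it decreases) yields $|\ln_{q,a}(r)(x)|\le C\,(1+x^2)^{a}$ on $\mathbb{R}$ for a constant $C=C(q,a,\eta)$.

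Finally I would invoke Lemma~\ref{l2} with $\lambda=1$ and $\gamma=a$. It gives $(1+x^2)^{a}\in L^1(\nu_{q,a;\xi})$ exactly when $q=1$, or $q>1$ with $a<\tfrac12+\tfrac1{q-1}+a-1$; in the latter inequality the terms $a$ cancel, leaving $\tfrac1{q-1}>\tfrac12$, i.e.\ $q<3$, which holds by hypothesis. Therefore the dominating function $C(1+x^2)^a$ is $\nu_{q,a;\xi}$-integrable, and so is $\ln_{q,a}(r)$.

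I do not expect a genuine obstacle. The only mild point is that the quadratic $-\ell_q(\cdot;\eta)$ is centred at $\mu'$, whereas $\nu_{q,a;\xi}$ (and hence the change of variables in the proof of Lemma~\ref{l2}) is built around the centre $\mu$ of $\xi$, so Lemma~\ref{l2} cannot be quoted verbatim; the two-sided comparison with $1+x^2$ in the second step is precisely what absorbs this shift. For completeness one may also remark that $|\ln_{q,a}(r)|\cdot\frac{d\nu_{q,a;\xi}}{dx}$ is continuous on $\mathbb{R}$, so integrability is governed purely by the behaviour as $|x|\to\infty$, which is what the comparison controls.
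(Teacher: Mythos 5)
Your proof is correct and follows essentially the same route as the paper: both reduce the claim, via the expression $\ln_{q,a}(r)=-\tfrac1a(-\ell_q(\cdot;\eta))^a$ from \eqref{eq:sub}, to Lemma~\ref{l2} with $\gamma=a$, where the integrability condition collapses to $q<3$. Your explicit two-sided comparison of the shifted quadratic with $1+x^2$ just fills in the change-of-centre detail that the paper leaves implicit.
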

\begin{proof}
The corollary trivially holds for $q=1$.

Assume $q>1$.
We observe from \eqref{eq:sub} that 
\begin{align*}
\ln_{q,{a}}( p(x;\mu,\sigma))
=-\frac1a\left\{-\ln_q\left( \frac1{Z_q \sigma} \right)+\frac{1}{(Z_q\sigma)^{1-q}(3-q)}\left(\frac{x-\mu}{\sigma}\right)^2 \right\}^{a}
\end{align*}
for $(\mu,\sigma) \in\mathbb{R}\times \Sigma_q$.
This with Lemma \ref{l2} yields that 
\begin{align} \label{condition}
\ln_{q,{a}}(r) \in L^1( \nu_{q,{a};\xi}) 
\ \Leftrightarrow \ 
a<\frac12+\frac{1}{q-1}+a-1,
\end{align}
which holds  for  $q<3$.
\qquad
$\qed$
\end{proof}

Following Definition \ref{oridef}, we define an entropy  and  a relative entropy on $\mathcal{S}_q$.
Recall  the escort expectation of  a function $f \in L^1(\nu_{q,{a};\xi})$ with respect to $\nu_{q,{a};\xi}$ is defined by 
\[
\mathbb{E}_{\nu_{q,{a};\xi}}[f]=\int_{\mathbb{R}} f(x)  d\nu_{q,{a};\xi}(x)=\int_{\mathbb{R}} f(x) \exp_{q,{a}}'\big(\ln_{q,{a}}\left(p_q(x;\xi) \right)\big) dx.
\]

\begin{definition}\label{def}
Let $1\leq q <3$ and ${a} \in \mathbb{R}\setminus\{0\}$.
Take $\xi\in \mathbb{R} \times \Sigma_q$ and set $p=p_q(\cdot;\xi)\in \mathcal{S}_q$.
\begin{enumerate}
\renewcommand{\labelenumi}{$(\arabic{enumi})$}
\item
The {\em $(q,{a})$-cross entropy} of   $p$ with respect to $r\in \mathcal{S}_q$ is defined by 
\[
d_{q,{a}}(p, r)
:=-\mathbb{E}_{ \nu_{q,{a};\xi}}[\ln_{q,{a}}(r) ] .
\]
\item
The \emph{$(q,{a})$-entropy} of $p$ is defined by 
\[
\mathrm{Ent}_{q,{a}}(p):=d_{q,{a}}(p,p).
\]
\item
The {\em  $(q,{a})$-relative entropy} of  $p$ with respect to $r\in \mathcal{S}_q$  is defined by 
\[
D^{(q,{a})}(p,r)
:=-d_{q,{a}}(p,p)+d_{q,{a}}(p,r).
\]
\end{enumerate}
\end{definition}

\begin{remark}
The domain of  the $(q,1)$-entropy can be extended to the whole of  $q$-Gaussian densities.
The $(q,1)$-entropy  coincides with the \emph{Boltzmann--Shannon entropy} if $q=1$,  and  the \emph{Tsallis entropy} otherwise.
\end{remark}

\begin{theorem}[gauge freedom of entropies]\label{thm}
Let   $1\leq q <3$ and ${a} \in \mathbb{R}\setminus\{0\}$.
Then 
\begin{align*}
\mathrm{Ent}_{q,1}=a\mathrm{Ent}_{q,{a}}, \qquad
D^{(q,1)}\neq  \lambda D^{(q,{a})}
\qquad
\text{for\ }a\neq 1 \ \text{\ and\ }\ \lambda \in \mathbb{R}.
\end{align*}
\end{theorem}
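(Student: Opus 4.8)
The plan is to treat the two assertions separately. The entropy identity is a one-line cancellation; the relative-entropy claim is reduced, after passing to the induced Riemannian metric, to the non-proportionality of two explicit quadratic forms on the two-parameter family $\mathcal{S}_q$.

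\smallskip
\noindent\textbf{The entropy identity.} By the second line of \eqref{eq:sub}, $\ln_{q,{a}}(p_q(x;\xi)) = -\tfrac1{a}(-\ell_q(x;\xi))^{a}$, while Lemma~\ref{seq} with $n=1$ gives $\tfrac{d\nu_{q,{a};\xi}}{dx}(x) = (-\ell_q(x;\xi))^{1-{a}}\chi_q(p_q(x;\xi))$. Hence, for $p=p_q(\cdot;\xi)$, the two powers of $-\ell_q$ combine and
\[
\mathrm{Ent}_{q,{a}}(p) = -\int_{\mathbb{R}}\ln_{q,{a}}(p)\,d\nu_{q,{a};\xi}
= \frac1{a}\int_{\mathbb{R}}\bigl(-\ell_q(x;\xi)\bigr)^{a}\bigl(-\ell_q(x;\xi)\bigr)^{1-{a}}\chi_q(p(x))\,dx
= \frac1{a}\int_{\mathbb{R}}\bigl(-\ell_q(x;\xi)\bigr)\chi_q(p(x))\,dx ,
\]
which is finite by the integrability result proved above (with $r=p$, ${a}=1$) and strictly positive since $-\ell_q(\cdot;\xi)>0$ on $\mathcal{S}_q$. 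The right-hand side is $\tfrac1{a}$ times its value at ${a}=1$, so $a\,\mathrm{Ent}_{q,{a}}=\mathrm{Ent}_{q,1}$; in particular $\mathrm{Ent}_{q,{a}}\neq 0$.

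\smallskip
\noindent\textbf{Reduction to a metric computation.} Suppose $D^{(q,1)}=\lambda D^{(q,{a})}$ on $\mathcal{S}_q\times\mathcal{S}_q$ for some $\lambda\in\mathbb{R}$. Then $\lambda\neq0$, because $D^{(q,1)}(p,r)=\int_{\mathbb{R}}[\ln_q p(x)-\ln_q r(x)]\chi_q(p(x))\,dx>0$ for $p\neq r$ by the concavity of $\ln_q$. Both $D^{(q,1)}$ and $D^{(q,{a})}$ vanish on the diagonal, so differentiating $t\mapsto D^{(q,\bullet)}(p,r_t)$ twice at $t=0$ along a curve $r_t$ through $p$ produces the induced Riemannian metric (well defined because the first derivative vanishes identically, as $\int_{\mathbb{R}}r_t\,dx\equiv1$ and $\partial_t\ell_q(\cdot;r_t)=\dot r_t/\chi_q(r_t)$), and the hypothesis forces $g^{(q,1)}=\lambda\,g^{(q,{a})}$ at every $p$. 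Expanding $\tfrac1{a}u^{1-{a}}(v^{a}-u^{a})=(v-u)+\tfrac{{a}-1}{2}\tfrac{(v-u)^2}{u}+O((v-u)^3)$ with $u=-\ell_q(\cdot;p)$, $v=-\ell_q(\cdot;r_t)$, one obtains
\[
g^{(q,{a})}(\dot r,\dot r)=q\int_{\mathbb{R}}\frac{\dot r^2}{p}\,dx+({a}-1)\int_{\mathbb{R}}\frac{\dot r^2}{\bigl(-\ell_q(\cdot;\xi)\bigr)\chi_q(p)}\,dx
=:g^{(q,1)}(\dot r,\dot r)+({a}-1)\,h(\dot r,\dot r),
\]
so $g^{(q,1)}$ is $q$ times the Fisher metric and $h$ is positive definite; this is also the metric promised in the introduction. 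Since ${a}\neq1$, $g^{(q,1)}=\lambda g^{(q,{a})}$ forces $h$ to be a scalar multiple of $g^{(q,1)}$ at every point of $\mathcal{S}_q$.

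\smallskip
\noindent\textbf{Ruling out proportionality.} It suffices to exhibit one centered $q$-Gaussian $p=p_q(\cdot;0,\sigma)$ at which $h$ and $g^{(q,1)}$ are not proportional. Here $-\ell_q(x;0,\sigma)=\alpha_\sigma+\beta_\sigma x^2$ with $\alpha_\sigma=-\ln_q\tfrac1{Z_q\sigma}>0$, $\beta_\sigma=\tfrac1{(Z_q\sigma)^{1-q}(3-q)\sigma^2}>0$, and $\partial_\mu p=2\beta_\sigma x\,\chi_q(p)$, $\partial_\sigma p=\tfrac{3-q}{\sigma}\beta_\sigma(x^2-\sigma^2)\chi_q(p)$ (the latter after simplification via \eqref{eq:sub}). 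By the evenness of $p$ both forms are diagonal in the frame $(\partial_\mu,\partial_\sigma)$, so $h\propto g^{(q,1)}$ amounts to the single scalar identity
\[
\Bigl(\int_{\mathbb{R}} x^2 p^{2q-1}\,dx\Bigr)\Bigl(\int_{\mathbb{R}}\frac{(x^2-\sigma^2)^2 p^{q}}{\alpha_\sigma+\beta_\sigma x^2}\,dx\Bigr)
=\Bigl(\int_{\mathbb{R}}(x^2-\sigma^2)^2 p^{2q-1}\,dx\Bigr)\Bigl(\int_{\mathbb{R}}\frac{x^2 p^{q}}{\alpha_\sigma+\beta_\sigma x^2}\,dx\Bigr),
\]
and I would disprove it by an expansion as $\sigma\to\infty$. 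For $q=1$ the left minus right side equals $\sigma^6\,\mathbb{E}\bigl[(Y^2-4Y+1)/(Y/2+\log\sqrt{2\pi}\sigma)\bigr]$ with $Y\sim\chi^2_1$; the leading term vanishes because $\mathbb{E}[Y^2-4Y+1]=3-4+1=0$ (exactly the coincidence of the two Fisher parts), but the next term, $\sim-2\sigma^6/(\log\sqrt{2\pi}\sigma)^2$, does not, so the identity fails for $\sigma$ large. An entirely parallel computation with the corresponding Student $t$-moments and the explicit $\alpha_\sigma,\beta_\sigma$ handles $1<q<3$. This contradicts $h\propto g^{(q,1)}$, completing the proof. \emph{The main obstacle is precisely this last step}: since the Fisher parts of $g^{(q,1)}$ and $g^{(q,{a})}$ always agree up to scale, the discrepancy carried by $h$ is invisible at leading order, and one must push the moment integrals past that order.
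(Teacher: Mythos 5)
Your proof of the entropy identity is correct and is essentially the paper's own computation: the factors $(-\ell_q)^{a}$ and $(-\ell_q)^{1-a}$ cancel inside the escort integral, giving $a\,\mathrm{Ent}_{q,a}=\mathrm{Ent}_{q,1}$. For the second claim you take a genuinely different route. The paper never linearizes: it compares the growth of $D^{(q,1)}(p,p_q(\cdot;0,\sigma))$ and $D^{(q,a)}(p,p_q(\cdot;0,\sigma))$ as $\sigma\to\infty$ after dividing by $-\ln_q\bigl(\tfrac{1}{Z_q\sigma}\bigr)$; since $d_{q,a}$ scales like the $a$-th power of that quantity, the normalized combination $\lambda D^{(q,a)}-D^{(q,1)}$ tends to a nonzero limit (or $\pm\infty$) for every $\lambda$ when $a\neq1$, by monotone/dominated convergence — a soft argument that treats all $1\leq q<3$ at once. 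Your reduction to second-order jets is legitimate in principle, and your decomposition $g^{(q,a)}=g^{(q,1)}+(a-1)h$ with $h(\dot r,\dot r)=\int \dot r^{2}\,\bigl((-\ell_q)\,p^{q}\bigr)^{-1}dx$ does agree with the paper's metric of Section 4 (rewrite the paper's coefficients using $-\ell_q=(p^{1-q}-1)/(q-1)$), although you would still need to justify the two differentiations under the integral sign with bounds of the type in Lemma~\ref{l2} and Corollary~\ref{c3}.

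The genuine gap is the final step, which is the crux of the theorem, for the range $1<q<3$. Your explicit verification that $h\not\propto g^{(q,1)}$ is carried out only at $q=1$ (and there it is essentially right: $\mathbb{E}[Y^2-4Y+1]=0$ kills the $1/\alpha_\sigma$ term and $\mathbb{E}[Y^3-4Y^2+Y]=4$ makes the $1/\alpha_\sigma^{2}$ term survive). For $q>1$ the computation is not ``entirely parallel'': there $1/(-\ell_q)=(q-1)\sum_{m\geq1}p^{m(q-1)}$, so $p^{q}/(-\ell_q)=(q-1)\bigl(p^{2q-1}+p^{3q-2}+\cdots\bigr)$, the leading term again reproduces the $g^{(q,1)}$-integrand and cancels identically in your cross-multiplied identity, and the decision is pushed onto a non-truncating series of higher $q$-Gaussian moments $\int x^{2k}p^{m(q-1)+1}dx$ (Beta-function values as in Lemma~\ref{l7}), whose discrepancy must be shown nonzero for every $q\in(1,3)$ — a power-type correction in $(Z_q\sigma)^{-(q-1)}$, not a $1/\log^{2}$ term, with different moment arithmetic. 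None of this is done; it is exactly the hard part of your plan, and it concerns the theorem's main (Tsallis) range. It is plausibly closable, e.g. by an asymptotic analysis as $\sigma\to\infty$ dominated by the $m=3$ term, or by the exact residue formulas of Proposition~\ref{p3}, but as written the argument for $D^{(q,1)}\neq\lambda D^{(q,a)}$ is complete only for $q=1$.
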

\begin{proof}
By the definition, we have that 
\begin{align*}
d_{q,{a}}(p_q(\cdot;\xi_0), p_q(\cdot;\xi))
&=\frac1a\int_{\mathbb{R}} \left(-\ell_q(x;\xi)\right)^{a} \nu_{q,{a};\xi_0}(x)\\
&=\frac1a\int_{\mathbb{R}} \left(-\ell_q(x;\xi)\right)^{a}\left(-\ell_q(x;\xi_0)\right)^{1-{a}} \chi_q(p_q(x;\xi_0))dx
\end{align*}
for $\xi_0, \xi\in\mathbb{R}\times \Sigma_q$, which implies that 
\begin{align*}
\mathrm{Ent}_{q,1}(p)=a\mathrm{Ent}_{q,{a}}(p)
&=-\int_{\mathbb{R}} \ln_{q} (p(x)) p(x)^q dx \\
&=\begin{cases}
\displaystyle
-\int_{\mathbb{R}} \frac{p(x)-p(x)^q}{1-q}  dx & q> 1,\\[10pt]
\displaystyle
-\int_{\mathbb{R}} p(x) \log (p(x)) dx& q=1
\end{cases}
\end{align*}
for $p\in \mathcal{S}_q$.

Recall that $\Sigma_q=\{\sigma>0 \ |\ \sigma >1/Z_q  \}$.
Since we observe that 
\[
\lim_{\sigma \to \infty} \frac{\left(-\ell_q(x;0,\sigma)\right)^{a}}{ -\ln_q\left(\frac1{Z_q \sigma}\right)}=
\begin{cases}
\infty & \text{if \ }{a}>1,\\
1& \text{if \ }{a}=1,\\
0& \text{if \ }{a}<1, {a} \neq 0, 
\end{cases}
\]
we apply the dominated convergence theorem ${a}\leq 1$ and 
the  monotone convergence theorem for ${a} >1$ to have 
\begin{align*}
&\frac{\lambda D^{(q,a)} (p, p_q(\cdot;0;\sigma))-D^{(q,1)}(p, p_q(\cdot;0;\sigma))}{ -\ln_q\left(\frac1{Z_q \sigma}\right)}\\
&=
- \frac{\lambda d_{q,a} (p, p)-d_{q,1}(p,p)}{ -\ln_q\left(\frac1{Z_q \sigma}\right)}+\frac{\lambda d_{q,a} (p, p_q(\cdot;0;\sigma))-d_{(q,1)}(p, p_q(\cdot;0;\sigma))}{ -\ln_q\left(\frac1{Z_q \sigma}\right)}\\
&\xrightarrow {\sigma \to \infty}
\begin{cases}
\lambda \cdot \infty-c & \text{if \ }{a}>1,\\
(\lambda-1)c& \text{if \ }{a}=1,\\
-c& \text{if \ }{a}<1, {a} \neq 0
\end{cases}
\end{align*}
for $p\in \mathcal{S}_q$ and $\lambda \in \mathbb{R}$,
where  we put $ 0\cdot \infty:=0$ and
\[
c:= \int_{\mathbb{R}}  \chi_q( p(x) ) dx.
 \]
This constant $c$ is obviously positive, and $c$ is finite due to Lemma \ref{l3} in the next section.
This ensures that $D^{({q,{a}})} \neq \lambda D^{(q,1)}$ for $a\neq 1$ and  $\lambda \in \mathbb{R}$.
\qquad
$\qed$
\end{proof}
The proof of Theorem \ref{thm}  immediately gives the following corollary.
\begin{corollary}
Let   $1\leq q <3$ and ${a} \in \mathbb{R}\setminus\{0\}$.
Then 
\begin{align*}
d_{q,1}\neq \lambda d_{q,{a}}
\qquad
\text{for\ }a\neq 1\  \text{\ and\ }\ \lambda \in \mathbb{R}.
\end{align*}
\end{corollary}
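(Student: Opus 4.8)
The plan is to derive the corollary from Theorem~\ref{thm} together with the defining relation between the cross entropy and the relative entropy, arguing by contradiction. Suppose that $d_{q,1}=\lambda\, d_{q,{a}}$ for some $\lambda\in\mathbb{R}$, meaning $d_{q,1}(p,r)=\lambda\, d_{q,{a}}(p,r)$ for all $p,r\in\mathcal{S}_q$. By item~(3) of Definition~\ref{def}, for every $p,r\in\mathcal{S}_q$ we then have
\[
D^{(q,1)}(p,r)=-d_{q,1}(p,p)+d_{q,1}(p,r)=\lambda\bigl(-d_{q,{a}}(p,p)+d_{q,{a}}(p,r)\bigr)=\lambda\, D^{(q,{a})}(p,r),
\]
so that $D^{(q,1)}=\lambda\, D^{(q,{a})}$. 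Since we are in the case ${a}\neq 1$, this contradicts the second assertion of Theorem~\ref{thm}. Hence no such $\lambda$ exists, which is precisely the claim. (Alternatively, one may first restrict to the diagonal: $d_{q,1}=\lambda\, d_{q,{a}}$ gives $\mathrm{Ent}_{q,1}=\lambda\,\mathrm{Ent}_{q,{a}}$, which combined with $\mathrm{Ent}_{q,1}={a}\,\mathrm{Ent}_{q,{a}}$ and $\mathrm{Ent}_{q,{a}}\not\equiv 0$ forces $\lambda={a}$, and then the same computation yields $D^{(q,1)}={a}\,D^{(q,{a})}$, again contradicting Theorem~\ref{thm}.)

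There is essentially no obstacle here: once Theorem~\ref{thm} is established, the statement is purely formal, which is exactly why it follows ``immediately'' from the theorem's proof. If one instead wants a self-contained argument that does not quote the theorem statement, one can mimic its proof directly: evaluate $d_{q,1}\bigl(p,p_q(\cdot;0,\sigma)\bigr)-\lambda\, d_{q,{a}}\bigl(p,p_q(\cdot;0,\sigma)\bigr)$, divide by $-\ln_q\bigl(1/(Z_q\sigma)\bigr)>0$, and let $\sigma\to\infty$. The same dominated- and monotone-convergence computations already carried out in the proof of Theorem~\ref{thm} show that this quotient tends to $c$ when ${a}<1$ and to $c-\lambda\cdot\infty$ (with the convention $0\cdot\infty:=0$) when ${a}>1$, where $c:=\int_{\mathbb{R}}\chi_q(p(x))\,dx\in(0,\infty)$. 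In every case with ${a}\neq 1$ this limit is nonzero, so $d_{q,1}-\lambda\, d_{q,{a}}$ cannot vanish identically. The only step requiring a word of justification along this second route is the interchange of limit and integral, which is handled exactly as in the proof of Theorem~\ref{thm}.
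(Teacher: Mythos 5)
Your proposal is correct. It differs mildly from the paper in emphasis: the paper asserts that the \emph{proof} of Theorem~\ref{thm} immediately gives the corollary, i.e.\ one reruns the $\sigma\to\infty$ limit argument directly on the cross entropies, which is exactly your second, self-contained sketch (with the quotient $\bigl(d_{q,1}(p,p_q(\cdot;0,\sigma))-\lambda d_{q,a}(p,p_q(\cdot;0,\sigma))\bigr)/\bigl(-\ln_q(1/(Z_q\sigma))\bigr)$ tending to $c$ for $a<1$ and to $c-\lambda\cdot\infty$ for $a>1$, nonzero in every case; the only point needing care there is the domination for $a\le 1$, which is the same as in the theorem's proof, as you note). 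Your primary route is instead a purely formal deduction from the \emph{statement} of Theorem~\ref{thm}: since $D^{(q,a)}(p,r)=-d_{q,a}(p,p)+d_{q,a}(p,r)$ by Definition~\ref{def}, proportionality $d_{q,1}=\lambda d_{q,a}$ on $\mathcal{S}_q\times\mathcal{S}_q$ transfers verbatim, with the same $\lambda$, to $D^{(q,1)}=\lambda D^{(q,a)}$, contradicting the theorem for $a\neq 1$. This is valid and arguably even more economical than what the paper suggests, since it uses the theorem as a black box and requires no further analysis; the direct route buys a quantitative statement (an explicit family of pairs on which $d_{q,1}-\lambda d_{q,a}$ is eventually nonzero) at the cost of repeating the convergence argument. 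The parenthetical variant pinning down $\lambda=a$ via $\mathrm{Ent}_{q,1}=a\,\mathrm{Ent}_{q,a}$ and $\mathrm{Ent}_{q,a}\not\equiv 0$ is fine but not needed.
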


\section{Refined  Riemannian metrics}
Throughout of this section, we fix  $ 1\leq q <3$ and ${a}\in \mathbb{R}\setminus\{0\}$ such that $I_{q,{a}} \neq \emptyset$, namely 
\[
\text{either\ } q=1\quad 
\text{or }\quad 
q>1 \text{\ with\ } 1-{a}< \dfrac{q}{q-1}.
\]
In this case, $t_{q,{a}}=0$.
Set 
\[
\Sigma_{q,{a}}:=\left\{\sigma \in \Sigma_q \  \big|\  \frac{1}{Z_q\sigma}<T_{q,{a}}\right\}, \qquad
\mathcal{S}_{q,{a}}:=\left\{p_q(\cdot; \xi)  \in \mathcal{S}_q \ |\ \xi \in \mathbb{R}\times \Sigma_{q,{a}}  \right\}.
\]
The manifold $\mathcal{S}_{q,{a}}$ admits information geometric structures.
\subsection{Derivatives of $(q,a)$-relative entropy}
The $(q,{a})$-relative entropy is  nondegenerate on  $\mathcal{S}_{q,{a}} \times \mathcal{S}_{q,{a}} $.
\begin{lemma}\label{cor}
For $p,r\in \mathcal{S}_{q,{a}}$,  $ D^{(q,{a})}(p,r) >0$.
\end{lemma}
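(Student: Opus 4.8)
The plan is to show that $D^{(q,a)}(p,r)>0$ for distinct $p,r$ by exploiting the strict concavity of $\ln_{q,a}$ on $I_{q,a}$ (Proposition~\ref{concave}), which holds precisely because we have restricted to $\mathcal{S}_{q,a}$, where the values $p_q(x;\xi)$ all lie in $(t_{q,a},T_{q,a})=I_{q,a}$. Writing out the definitions,
\[
D^{(q,a)}(p,r)=d_{q,a}(p,r)-d_{q,a}(p,p)
=\mathbb{E}_{\nu_{q,a;\xi}}\big[\ln_{q,a}(p)-\ln_{q,a}(r)\big],
\]
where $p=p_q(\cdot;\xi)$. So the quantity to be estimated is an escort expectation of the pointwise difference $\ln_{q,a}(p(x))-\ln_{q,a}(r(x))$. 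Since $\ln_{q,a}$ is strictly concave on $I_{q,a}$, the tangent-line (supporting hyperplane) inequality gives, for all $x$,
\[
\ln_{q,a}(r(x))\le \ln_{q,a}(p(x))+\ln_{q,a}'(p(x))\,\big(r(x)-p(x)\big),
\]
with equality only when $r(x)=p(x)$. Because $\ln_{q,a}'(t)=1/\chi_{q,a}(t)$ by \eqref{deriv}, and $d\nu_{q,a;\xi}/dx=1/\ln_{q,a}'(p(x))=\chi_{q,a}(p(x))$, the weighted integral of the correction term collapses:
\[
\int_{\mathbb{R}}\ln_{q,a}'(p(x))\,(r(x)-p(x))\,\frac{dx}{\ln_{q,a}'(p(x))}
=\int_{\mathbb{R}}\big(r(x)-p(x)\big)\,dx=1-1=0,
\]
using that $p,r$ are both probability densities. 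Hence $D^{(q,a)}(p,r)=\mathbb{E}_{\nu_{q,a;\xi}}[\ln_{q,a}(p)-\ln_{q,a}(r)]\ge 0$, and the inequality is strict unless $r(x)=p(x)$ for Lebesgue-almost every $x$, i.e.\ unless $p=r$ in $\mathcal{S}_{q,a}$.

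The main obstacle is integrability: I must justify that all three integrals above are finite and that the integrand $\ln_{q,a}(p)-\ln_{q,a}(r)$, as well as $r-p$ and $\ln_{q,a}'(p)(r-p)$ weighted by $d\nu_{q,a;\xi}$, genuinely lie in $L^1$, so that linearity of the integral and the splitting into "main term plus vanishing correction" is legitimate. The corollaries following Lemma~\ref{l2} already give $\ln_{q,a}(r)\in L^1(\nu_{q,a;\xi})$ for all $p,r\in\mathcal{S}_q$ (in particular on the smaller set $\mathcal{S}_{q,a}$), so $d_{q,a}(p,r)$ and $d_{q,a}(p,p)$ are both finite and $D^{(q,a)}(p,r)$ is well defined; what remains is to check that $\int_{\mathbb{R}}(r-p)\,dx=0$ may be used, which is immediate since $r,p\in L^1(dx)$ with unit mass, and that the supporting-line remainder is integrable against $d\nu_{q,a;\xi}$ — but this remainder equals $\ln_{q,a}(p)-\ln_{q,a}(r)$ minus $\ln_{q,a}'(p)(r-p)\,$, a difference of two terms each already shown integrable, so no new estimate is needed. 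Finally, strictness: the set where $r(x)\ne p(x)$ has positive Lebesgue measure whenever $p\ne r$ (distinct continuous densities), and on that set the concavity inequality is strict while the weight $\chi_{q,a}(p(x))$ is positive, so the integral is strictly positive. This completes the argument.
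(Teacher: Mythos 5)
Your proof is correct and is essentially the paper's argument: the paper applies the supporting-line inequality for the strictly convex $\exp_{q,{a}}$ (Proposition~\ref{concave}) at $\ln_{q,{a}}(p(x))$ and integrates, using $\int_{\mathbb{R}} p\,dx=\int_{\mathbb{R}} r\,dx=1$, and your tangent-line inequality for the strictly concave $\ln_{q,{a}}$, multiplied by the escort weight $\chi_{q,{a}}(p(x))$, is the same pointwise inequality merely rearranged. Your added remarks on integrability and on strictness (strict only off the null set where $r(x)=p(x)$, hence only for $p\neq r$) are fine and in fact slightly more careful than the paper's wording.
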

\begin{proof}
Proposition \ref{concave} yields that $\exp_{q,{a}}''(\ln_{q,{a}}(p(x)))>0$  in $x\in \mathbb{R}$ for $p\in \mathcal{S}_{q,{a}}$.
The strict convexity of $\exp_{q,{a}}$ leads to the inequality that 
\begin{align*}
r(x)&=
\exp_{q,{a}}(\ln _{q,{a}}(r(x))  ) \\
&> \exp_{q,{a}}(\ln _{q,{a}}(p(x)))+\left\{\ln _{q,{a}}(r(x))-\ln _{q,{a}}(p(x))\right\} \exp_{q,{a}}'(\ln _{q,{a}}(p(x)))\\
&=p(x)+\ln _{q,{a}}(r(x)) \exp_{q,{a}}'(\ln _{q,{a}}(p(x)))
-\ln _{q,{a}}(p(x))  \exp_{q,{a}}'(\ln _{q,{a}}(p(x)))
\end{align*}
for $x\in \mathbb{R}$ and $p,r\in \mathcal{S}_{q,{a}}$.
Integrating this inequality on $\mathbb{R}$ gives
\[
1>1-d_{q,{a}}(p,r)+d_{q,{a}}(p,p)=1-D^{(q,{a})}(p,r).
\qquad
\qed
\]
\end{proof}

Let us define a function  $\rho^{(q,{a})}$ on  $(x,\xi_1,\xi_2)\in \mathbb{R} \times(\mathbb{R}\times \Sigma_{q,{a}})^2$ by 
\begin{align*}
\rho^{(q,{a})}(x; \xi_1,\xi_2)
:= \left\{\ln_{q,{a}}(p_q(x;  \xi_1))-\ln_{q,{a}}(p_q(x;  \xi_2))\right\} \exp'_{q, {a}} \left(\ln_{q,{a}}(p_q(x; \xi_1))\right),
\end{align*}
which is  the integrand of $D^{(q,{a})} (p_q(\cdot;\xi_1),p_q(\cdot;\xi_2))$.

Given $\xi_i=(\mu_i, \sigma_i) \in \mathbb{R}\times \Sigma_{q,{a}}$, it turns out that 
\begin{align*}
& \frac{\partial}{\partial s_1} \frac{\partial}{\partial s_2}\rho^{(q,{a})}(x;\xi_1, \xi_2) \Big|_{(\xi,\xi)} \\
&=-\frac{\partial}{\partial s_2}\ln_{q,{a}}\left( p_q \left(x; \xi_2 \right)\right) \cdot \frac{\partial}{\partial s_1}  \exp'_{q, {a}} \left(\ln_{q,{a}}\left(p_q(x;\xi_1\right)\right) \Big|_{(\xi,\xi)}\\
&=-\frac{\partial}{\partial s_2}\ln_{q,{a}}\left( p_q \left(x; \xi_2 \right)\right) \cdot \frac{\partial}{\partial s_1} \ln_{q,{a}}\left(p_q(x;\xi_1\right)
\cdot  \exp''_{q, {a}} \left(\ln_{q,{a}}\left(p_q(x;\xi_1\right)\right) \Big|_{(\xi,\xi)}\\
&=
-\frac{\partial}{\partial s_2}\left\{-\frac{1}{{a}}\left(-\ell_q(x;\xi_2)\right)^{{a}} \right\}\cdot
\frac{\partial}{\partial s_1}\left\{-\frac{1}{{a}}\left(-\ell_q(x;\xi_1)\right)^{{a}} \right\} \bigg|_{(\xi,\xi)} \\
& \qquad\quad \times 
  p_q(x;\xi)^{(2-1)(q-1)+q} \left(-\ell_q(x;\xi)\right)^{2(1-{a})} \sum_{j=0}^1 b_j^2 \left( -\ell_q(x;\xi)\right)^{-j} \\
&=-\sum_{j=0}^1 b_j^2
\left( \frac{\partial}{\partial s_2} \ell_q(x;\xi_2) \cdot
\frac{\partial}{\partial s_1}\ell_q(x;\xi_1) \bigg|_{(\xi,\xi)}\cdot  \left(-\ell_q(x,\xi)\right)^{-j}  p_q(x;\xi)^{2q-1}   \right)
\end{align*}
for  $s_i\in \{ \mu_i, \sigma_i\}$, 
where we used Lemma \ref{seq} in the case $n=2$.

Let us  generalize Lemma \ref{l2}.
\begin{lemma}\label{l3}
Fix  $n\in \mathbb{N}$ and $\gamma\geq 0$. 
Then $\exp_q(-x^2)^{(n-1)(q-1)+q} \cdot x^{2\gamma}\in L^1(dx)$
if and only if 
\[
\text{either\ } q=1\quad \text{or }\quad q>1\text{\ with\ } \gamma<\frac12+\frac{1}{q-1}+n-1.
\]
\end{lemma}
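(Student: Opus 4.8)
The plan is to follow, almost verbatim, the comparison argument already used for Lemma~\ref{l2}: the only change is that the factor $\chi_q(p_q)=p_q^{\,q}$ appearing there is replaced by the fixed power $\exp_q(-x^2)^{(n-1)(q-1)+q}$, and the weight $(\lambda+x^2)^\gamma$ by $x^{2\gamma}$. First I would observe that for $q\geq 1$ the point $-x^2$ always lies in the domain $\ln_q(0,\infty)$ of $\exp_q$ (recall $\ln_q(0,\infty)=\mathbb{R}$ when $q=1$ and $\ln_q(0,\infty)=(-\infty,\tfrac1{q-1})$ when $q>1$), so the integrand $x\mapsto\exp_q(-x^2)^{(n-1)(q-1)+q}x^{2\gamma}$ is continuous and finite on all of $\mathbb{R}$. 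Hence membership in $L^1(dx)$ is decided purely by the decay as $x\to\pm\infty$, and by evenness it is enough to analyze $x\to+\infty$.

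In the case $q=1$ the exponent degenerates, since $(n-1)(q-1)+q=1$, so the integrand is $\exp(-x^2)\,x^{2\gamma}$; this has Gaussian decay and is therefore integrable for every $\gamma\geq 0$, matching the ``if and only if'' of the statement (no constraint when $q=1$).

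For $q>1$ I would write $\exp_q(-x^2)=\{1+(q-1)x^2\}^{-1/(q-1)}$, so that
\[
\exp_q(-x^2)^{(n-1)(q-1)+q}\,x^{2\gamma}=\{1+(q-1)x^2\}^{-\frac{(n-1)(q-1)+q}{q-1}}\,x^{2\gamma},
\]
and then use the algebraic identity $\tfrac{(n-1)(q-1)+q}{q-1}=n+\tfrac1{q-1}$ (equivalently $(n-1)(q-1)+q=n(q-1)+1$). Consequently there exist constants $c,C,R>0$, depending on $q,n,\gamma$, with
\[
c\,x^{\,2\gamma-2n-\frac{2}{q-1}}\ \leq\ \exp_q(-x^2)^{(n-1)(q-1)+q}\,x^{2\gamma}\ \leq\ C\,x^{\,2\gamma-2n-\frac{2}{q-1}}
\]
for all $x>R$. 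Since $\int_R^\infty x^{\alpha}\,dx$ converges precisely when $\alpha<-1$, integrability is equivalent to $2\gamma-2n-\tfrac{2}{q-1}<-1$, i.e.\ to $\gamma<\tfrac12+\tfrac1{q-1}+n-1$, which is the asserted condition.

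I do not expect a genuine obstacle: this is the same two-sided comparison computation carried out for Lemma~\ref{l2}. The only points that need a little care are the bookkeeping of the exponent $(n-1)(q-1)+q$ and its simplification to $n(q-1)+1$ (hence $\tfrac{(n-1)(q-1)+q}{q-1}=n+\tfrac1{q-1}$), together with the remark that $x\mapsto\exp_q(-x^2)$ has no singularity for $q\geq 1$, so that only the tail behavior at infinity is relevant.
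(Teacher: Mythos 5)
Your proposal is correct and follows essentially the same route as the paper: the case $q=1$ is immediate by Gaussian decay, and for $q>1$ one sandwiches the integrand between constant multiples of $x^{2\gamma-2n-\frac{2}{q-1}}$ for large $x$ (using $(n-1)(q-1)+q=n(q-1)+1$) and compares with $\int_R^{\infty}x^{\alpha}\,dx$, which converges precisely when $\alpha<-1$. The only difference is cosmetic bookkeeping of the exponent, so no further comment is needed.
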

\begin{proof}
The lemma trivially holds for $q=1$.
Assume $q>1$.
There exist $c,C,R>0$ depending on $q$ such that 
\begin{align*}
&c x^{2\frac{(n-1)(q-1)+q}{1-q}+2\gamma}\\
&<
\exp_q(-x^2)^{(n-1)(q-1)+q}\cdot  x^{2\gamma}
=
\left\{1-(1-q)x^2\right\}^{\frac{(n-1)(q-1)+q}{1-q}}\cdot  x^{2\gamma} \\
 &< 
C x^{2\frac{(n-1)(q-1)+q}{1-q}+2\gamma}
\end{align*}
for $x>R$.
This yields that $\exp_q(-x^2)^{(n-1)(q-1)+q} x^{2\gamma} \in L^1(dx) $ 
if and only if 
\begin{align*}
2\frac{(n-1)(q-1)+q}{1-q}+2\gamma<-1
\ \Leftrightarrow\  
\gamma<\frac12+\frac{1}{q-1}+n-1.
\qquad
\qed
\end{align*}
\end{proof}
\begin{corollary}\label{c3}
For $n\in \mathbb{N}, 0\leq \gamma\leq n, j\in\mathbb{Z}_{\geq 0}$ and $\xi\in\mathbb{R} \times \Sigma_{q,{a}}$, 
then
\[ 
  p_q(x;\xi)^{(n-1)(q-1)+q}  \cdot x^{2\gamma} \cdot  \left(-\ell_q(x;\xi)\right)^{-j} \in L^1(dx).
\]
\end{corollary}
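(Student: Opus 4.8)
The plan is to reduce Corollary~\ref{c3} to Lemma~\ref{l3} by a change of variables and a comparison of the extra factor $(-\ell_q(x;\xi))^{-j}$ against a polynomial. First I would recall from \eqref{eq:sub} that
\[
-\ell_q(x;\xi)=-\ln_q\!\left(\frac1{Z_q\sigma}\right)+\frac{1}{(Z_q\sigma)^{1-q}(3-q)}\left(\frac{x-\mu}{\sigma}\right)^2,
\]
so that $-\ell_q(x;\xi)$ is a strictly positive quadratic polynomial in $x$, bounded below by the positive constant $-\ln_q(1/(Z_q\sigma))$ (here we use $\sigma\in\Sigma_q$). Consequently $\left(-\ell_q(x;\xi)\right)^{-j}$ is bounded on $\mathbb{R}$ for every $j\in\mathbb{Z}_{\geq0}$; indeed $0<\left(-\ell_q(x;\xi)\right)^{-j}\leq\left(-\ln_q(1/(Z_q\sigma))\right)^{-j}$. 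So this factor can simply be absorbed into a constant and plays no role in integrability.

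Next I would dispose of the $p_q$ factor. For $q=1$ the density $p_q(x;\xi)$ decays like $\exp(-cx^2)$, so the product with any polynomial $x^{2\gamma}$ is integrable and the corollary is immediate. For $1<q<3$ I would write
\[
p_q(x;\xi)^{(n-1)(q-1)+q}=\frac{1}{(Z_q\sigma)^{(n-1)(q-1)+q}}\exp_q\!\left(-\frac1{3-q}\left(\frac{x-\mu}{\sigma}\right)^2\right)^{(n-1)(q-1)+q},
\]
and then change variables to $y=(x-\mu)/(\sigma\sqrt{3-q})$, which turns the question into whether $\exp_q(-y^2)^{(n-1)(q-1)+q}\cdot(1+y^2)^{\gamma}$ — equivalently $\exp_q(-y^2)^{(n-1)(q-1)+q}\cdot y^{2\gamma}$ for the tail behaviour — lies in $L^1(dy)$. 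Here the term $x^{2\gamma}$ becomes a polynomial of degree $2\gamma$ in $y$, which near infinity is comparable to $y^{2\gamma}$, so Lemma~\ref{l3} applies directly: the product is integrable precisely when $q=1$ or $q>1$ with $\gamma<\tfrac12+\tfrac1{q-1}+n-1$.

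It remains to check that the hypothesis $\xi\in\mathbb{R}\times\Sigma_{q,{a}}$ together with $0\leq\gamma\leq n$ guarantees the inequality $\gamma<\tfrac12+\tfrac1{q-1}+n-1$ needed by Lemma~\ref{l3}. Since $\gamma\leq n$, it suffices that $n<\tfrac12+\tfrac1{q-1}+n-1$, i.e. $\tfrac12<\tfrac1{q-1}$, i.e. $q<3$; this is exactly our standing assumption $1\leq q<3$, so the bound holds with room to spare. I expect the main (though still routine) obstacle to be bookkeeping: making precise the comparison between the polynomial $x^{2\gamma}$ and $y^{2\gamma}$ under the affine substitution (a lower-order polynomial is dominated by $C(1+y^2)^{\gamma}$, uniformly in the fixed parameters), and confirming that the $\sigma$-dependent prefactors are harmless finite constants. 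Assembling these three observations — boundedness of $(-\ell_q)^{-j}$, the change of variables reducing to Lemma~\ref{l3}, and the elementary check that $q<3$ forces the required exponent inequality — completes the proof.
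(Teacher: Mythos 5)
Your proposal is correct and follows essentially the same route as the paper: the paper likewise invokes Lemma~\ref{l3} after a change of variables, using the observation that $\gamma\leq n<\tfrac12+\tfrac1{q-1}+n-1$ for $1<q<3$, and then absorbs the factor $\left(-\ell_q(x;\xi)\right)^{-j}$ via the uniform lower bound $-\ell_q(x;\xi)\geq-\ln_q\!\left(\frac{1}{Z_q\sigma}\right)>0$. The only cosmetic point is that for non-integer $\gamma$ the factor $x^{2\gamma}$ is not literally a polynomial after the affine substitution, but your domination by $C(1+y^2)^{\gamma}$ handles this, so no gap remains.
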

\begin{proof}
Since we have that
\[
n<\frac12+\frac{1}{q-1}+n-1 \qquad \text{for\ }1<q<3,
\]
we apply Lemme~\ref{l3} together with the change of variables to have that
\[ 
  p_q(x;\xi)^{(n-1)(q-1)+q}   \cdot  x^{2\gamma}\in L^1({dx}) \qquad \text{for\ } 0\leq\gamma \leq n.
\]
Moreover, the fact that 
\[
-\ell_q(x;\xi)\geq -\ln_q \left(\frac{1}{Z_q \sigma}\right)>0
\]
completes  the proof of the corollary.
\qquad
$\qed$
\end{proof}
Combining the computation that 
\begin{align}
\begin{split}\label{der}
\frac{\partial}{\partial \mu} \ell_q(x;\mu,\sigma)
&=\frac{2}{(3-q)}\cdot\frac1{(Z_q\sigma)^{1-q}\sigma}\frac{x-\mu}{\sigma}, \\
\frac{\partial}{\partial \sigma} \ell_q(x;\mu,\sigma)
&=-\frac{1}{(Z_q\sigma)^{1-q}\sigma}\left\{1-\left(\frac{x-\mu}{\sigma}\right)^2\right\}
\end{split}
\end{align}
with Corollary \ref{c3} in the case $n=2$, we conclude that 
\[
x\mapsto  \frac{\partial}{\partial s_1} \frac{\partial}{\partial s_2}\rho^{(q,{a})}(x;\xi_1, \xi_2)\Big|_{(\xi,\xi)}
\]
is integrable on $\mathbb{R}$ for $\xi\in \mathbb{R}\times \Sigma_{q,{a}}$.
Since the function $x\mapsto  \rho^{(q,{a})}(x;\xi_1, \xi_2)$ is integrable on $\mathbb{R}$ for  $(\xi_1, \xi_2)\in (\mathbb{R}\times \Sigma_{q,{a}})^2$,
the dominated convergence theorem implies that 
\begin{align*}
& \frac{\partial}{\partial s_1} \frac{\partial}{\partial s_2} D^{(q,{a})} (p_q(\cdot;\xi_1),p_q(\cdot;\xi_2)\Big|_{(\xi,\xi)}\\
&=-\int_{\mathbb{R}}\frac{\partial}{\partial s_2}\ln_{q,{a}}\left( p_q \left(x; \xi_2 \right)\right) \cdot \frac{\partial}{\partial s_1} \ln_{q,{a}}\left(p_q(x;\xi_1\right)
\cdot  \exp''_{q, {a}} \left(\ln_{q,{a}}\left(p_q(x;\xi_1\right)\right) \Big|_{(\xi,\xi)}dx\\
&=-\sum_{j=0}^1 b_j^2 \int_{\mathbb{R}}
\left( \frac{\partial}{\partial s_2} \ell_q(x;\xi_2) \cdot
\frac{\partial}{\partial s_1}\ell_q(x;\xi_1) \right)\bigg|_{(\xi,\xi)}\cdot  \left(-\ell_q(x,\xi)\right)^{-j}  p_q(x;\xi)^{2q-1}  dx
\end{align*}
for $s_i\in \{\mu_i, \sigma_i\}$.
This quantity evaluated at the diagonal set  $\{(\xi_1,\xi_2) \ |\ \xi_1=\xi_2 \}$ provides a Riemannian metric on $\mathcal{S}_{q,{a}}$.
\begin{definition}
For $s,t\in\{\mu, \sigma\}$, 
define a function $g^{(q,{a})}_{st}:\mathbb{R} \times \Sigma_{q,{a}}\to\mathbb{R} $ by
\begin{align*}
g^{(q,{a})}_{st}(\xi)
:=\int_{\mathbb{R}} 
\frac{\partial}{\partial s}\ln_{q,{a}} \left(p_q(x;\xi)\right)\cdot
\frac{\partial}{\partial t}\ln_{q,{a}} \left(p_q(x;\xi)\right)\cdot
\exp''_{q,{a}}\left(\ln_{q,{a}} \left(p_q(x;\xi)\right)\right) dx.
\end{align*}
\end{definition}

\begin{theorem}
For $\xi\in \mathbb{R} \times \Sigma_{q,{a}}$ and $s,t\in\{\mu, \sigma\}$, 
\[
g^{(q,{a})}\left(\frac{\partial}{\partial s},\frac{\partial}{\partial t} \right) (p_{q}(\cdot;\xi))
:=
g^{(q,{a})}_{st}(\xi)
\]
 determines a Riemannian metric on $\mathcal{S}_{q,{a}}$.
\end{theorem}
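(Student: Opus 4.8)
The plan is to check the three defining properties of a Riemannian metric for the symmetric $2$-tensor $g^{(q,{a})}$ on $\mathcal{S}_{q,{a}}$, which we regard as parametrized by the open set $\mathbb{R}\times\Sigma_{q,{a}}\subset\mathbb{R}^{2}$: that each component $g^{(q,{a})}_{st}(\xi)$ is finite and depends smoothly on $\xi$, that $g^{(q,{a})}_{st}=g^{(q,{a})}_{ts}$, and that the matrix $(g^{(q,{a})}_{st}(\xi))_{s,t\in\{\mu,\sigma\}}$ is positive definite at every $\xi$. Symmetry is immediate from the definition, and finiteness is essentially already in hand: by Lemma \ref{seq} with $n=2$ the integrand defining $g^{(q,{a})}_{st}(\xi)$ equals $\sum_{j=0}^{1}b^{2}_{j}\,\partial_{s}\ell_q(x;\xi)\,\partial_{t}\ell_q(x;\xi)\,(-\ell_q(x;\xi))^{-j}\,p_q(x;\xi)^{2q-1}$, which by \eqref{der} and Corollary \ref{c3} with $n=2$ lies in $L^{1}(dx)$.

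For smoothness in $\xi$ I would differentiate under the integral sign: every $\xi$-derivative of this integrand produces further factors that are polynomial in $x$ times nonnegative powers of $-\ell_q(x;\xi)$ times $p_q(x;\xi)^{2q-1}$, and since $-\ell_q(x;\xi)\ge-\ln_q(1/(Z_q\sigma))>0$, on each compact subset of $\mathbb{R}\times\Sigma_{q,{a}}$ all of these are dominated by one $L^{1}(dx)$ function produced exactly as in the proof of Corollary \ref{c3}; this legitimizes differentiation under the integral to all orders.

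The heart of the argument is positive definiteness. Fixing $\xi=(\mu,\sigma)\in\mathbb{R}\times\Sigma_{q,{a}}$ and a nonzero $(v^{\mu},v^{\sigma})\in\mathbb{R}^{2}$, the definition of $g^{(q,{a})}_{st}$ gives
\[
\sum_{s,t\in\{\mu,\sigma\}}v^{s}v^{t}g^{(q,{a})}_{st}(\xi)=\int_{\mathbb{R}}\Big(\sum_{s\in\{\mu,\sigma\}}v^{s}\,\partial_{s}\ln_{q,{a}}(p_q(x;\xi))\Big)^{2}\exp''_{q,{a}}\!\big(\ln_{q,{a}}(p_q(x;\xi))\big)\,dx.
\]
Since we are in the regime $t_{q,{a}}=0$ and $\Sigma_{q,{a}}=\{\sigma\in\Sigma_q:1/(Z_q\sigma)<T_{q,{a}}\}$, we have $0<p_q(x;\xi)\le 1/(Z_q\sigma)<T_{q,{a}}$ for every $x$, hence $p_q(x;\xi)\in I_{q,{a}}$, and Proposition \ref{concave}---exactly as applied in the proof of Lemma \ref{cor}---yields $\exp''_{q,{a}}(\ln_{q,{a}}(p_q(x;\xi)))>0$ for all $x$. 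By \eqref{eq:sub} one has $\partial_{s}\ln_{q,{a}}(p_q(x;\xi))=(-\ell_q(x;\xi))^{{a}-1}\,\partial_{s}\ell_q(x;\xi)$ with the first factor positive, so the integrand vanishes at $x$ exactly when $v^{\mu}\partial_{\mu}\ell_q(x;\xi)+v^{\sigma}\partial_{\sigma}\ell_q(x;\xi)=0$. By \eqref{der}, $\partial_{\mu}\ell_q(\cdot;\xi)$ is a nonzero multiple of $x-\mu$ and $\partial_{\sigma}\ell_q(\cdot;\xi)$ is a nonzero multiple of $1-((x-\mu)/\sigma)^{2}$, so these two functions of $x$ are linearly independent; hence $x\mapsto v^{\mu}\partial_{\mu}\ell_q(x;\xi)+v^{\sigma}\partial_{\sigma}\ell_q(x;\xi)$ is a polynomial in $x$ that is not identically zero when $(v^{\mu},v^{\sigma})\neq(0,0)$, and therefore vanishes only at finitely many $x$. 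Consequently the integrand is strictly positive for almost every $x$, so the quadratic form is strictly positive; combined with the previous points this shows $g^{(q,{a})}$ is a Riemannian metric on $\mathcal{S}_{q,{a}}$.

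The step I expect to be the main obstacle is establishing that the weight $\exp''_{q,{a}}(\ln_{q,{a}}(p_q(x;\xi)))$ is everywhere positive: this is precisely where the definitions of $I_{q,{a}}$, $T_{q,{a}}$ and $\Sigma_{q,{a}}$ must be lined up so that Proposition \ref{concave} applies to $p_q(x;\xi)$ for \emph{all} $x$. Once that is secured, the linear independence of the score functions $\partial_{\mu}\ell_q,\partial_{\sigma}\ell_q$ as functions of $x$ and the domination estimates needed for differentiation under the integral are routine.
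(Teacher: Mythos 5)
Your proof is correct, but it takes a different route to positive definiteness than the paper does. The paper's own proof exploits the coordinates: it shows $g^{(q,{a})}_{\mu\mu}>0$ and $g^{(q,{a})}_{\sigma\sigma}>0$ because the corresponding integrands are positive (the weight $\exp''_{q,{a}}(\ln_{q,{a}}(p_q))>0$ coming from Proposition~\ref{concave}, as in Lemma~\ref{cor}), and it shows $g^{(q,{a})}_{\mu\sigma}=0$ because, by \eqref{der}, the mixed integrand is odd in $x$ about $x=\mu$; the matrix is therefore diagonal with positive diagonal, hence positive definite. You instead run the standard Fisher-information-type argument: write the quadratic form as $\int(\sum_s v^s\partial_s\ln_{q,{a}}(p_q))^2\exp''_{q,{a}}(\ln_{q,{a}}(p_q))\,dx$, use the same positivity of the weight, and conclude nondegeneracy from the linear independence of the score functions $\partial_\mu\ell_q$ and $\partial_\sigma\ell_q$ via \eqref{eq:sub} and \eqref{der}. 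Your argument is more robust (it is parametrization-independent and would apply without the reflection symmetry), and you also make explicit the finiteness (Lemma~\ref{seq} with $n=2$, Corollary~\ref{c3}) and smoothness of the components, which the paper leaves implicit; the paper's argument, on the other hand, yields the extra structural fact $g^{(q,{a})}_{\mu\sigma}=0$, which it then uses in the explicit computations of Section~4.2. One small caveat, shared with the paper's own Lemma~\ref{cor}: strict convexity of $\exp_{q,{a}}$ alone does not formally give $\exp''_{q,{a}}>0$ pointwise; what justifies it is that the proof of Proposition~\ref{concave} actually shows $\frac{d^2}{dt^2}\ln_{q,{a}}<0$ on $I_{q,{a}}$, whence $\exp''_{q,{a}}>0$ on $\ln_{q,{a}}(I_{q,{a}})$ by the inverse-function relation, so your appeal is fine but is best phrased through that computation rather than through strict convexity.
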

\begin{proof}
It is enough to show that
\[
g^{(q,{a})}_{\mu\mu}, g^{(q,{a})}_{\sigma\sigma}>0\quad \text{\ and\ }\quad
g^{(q,{a})}_{\mu\sigma}=0
\quad\text{\ on\ }  \mathbb{R} \times \Sigma_{q,{a}}.
\]
The positivities of $g^{(q,{a})}_{\mu\mu}, g^{(q,{a})}_{\sigma\sigma}$ follows from that of 
\[
\frac{\partial}{\partial s}\ln_{q,{a}} \left(p_q(x;\xi)\right)\cdot
\frac{\partial}{\partial s}\ln_{q,{a}} \left(p_q(x;\xi)\right)\cdot
\exp''_{q,{a}}\left(\ln_{q,{a}} \left(p_q(x;\xi)\right)\right)
\qquad
\text{for\ }s\in \{\mu,\sigma\}.
\]
We derive $g^{(q,{a})}_{\mu\sigma}=0$ from the fact that 
\[
\frac{\partial}{\partial \mu}\ln_{q,{a}} \left(p_q(x;\xi)\right)\cdot
\frac{\partial}{\partial \sigma}\ln_{q,{a}} \left(p_q(x;\xi)\right)\cdot
\exp''_{q,{a}}\left(\ln_{q,{a}} \left(p_q(x;\xi)\right)\right)
\]
is an odd function in $x\in \mathbb{R}$ with respect to $x=\mu$ according to \eqref{der}.
\qquad
$\qed$
\end{proof}

\begin{remark}
The  Riemannian metric $g^{(q,1)}$  coincides with the \emph{Fisher metric}  up to scalar multiple.
The third order derivatives of  $(q,1)$-relative entropy on the set of all $q$-Gaussian densities induce a pair of affine connections.
The  cubic tensor  which expresses the difference between the two affine connections is called the  {\em Amari--\v{C}encov tensor}.
In a similar way, a cubic tensor $C^{(q,{a})}$  is  defined by 
\begin{align*}
&C^{(q,{a})}\left(\frac{\partial}{\partial s},\frac{\partial}{\partial t},\frac{\partial}{\partial u}\right)(p_q(\cdot;\xi))
\\
&:=
\int_{\mathbb{R}} 
\frac{\partial}{\partial s}\ln_{q,{a}} \left(p_q(x;\xi)\right)\cdot
\frac{\partial}{\partial t}\ln_{q,{a}} \left(p_q(x;\xi)\right)\cdot
\frac{\partial}{\partial u}\ln_{q,{a}} \left(p_q(x;\xi)\right)\\
& \qquad\qquad\qquad\qquad \qquad\qquad\qquad\quad  \times 
\exp'''_{q,{a}}\left(\ln_{q,{a}} \left(p_q(x;\xi)\right)\right) dx\\
&=\int_{\mathbb{R}} 
\frac{\partial}{\partial s}\left\{-\frac{1}{{a}}\left(-\ell_q(x;\xi)\right)^{{a}} \right\}\cdot
\frac{\partial}{\partial t}\left\{-\frac{1}{{a}}\left(-\ell_q(x;\xi)\right)^{{a}} \right\}\cdot
\frac{\partial}{\partial u}\left\{-\frac{1}{{a}}\left(-\ell_q(x;\xi)\right)^{{a}} \right\}  \\
& \qquad\qquad\qquad\qquad \times 
p_q(x;\xi)^{(3-1)(q-1)+q} \left(-\ell_q(x;\xi)\right)^{3(1-{a})}\sum_{j=0}^{2}b_j^2\left(-\ell_q(x;\xi)\right)^{-j}\\
&= \sum_{j=0}^2 b_j^3 \int_{\mathbb{R}} 
\frac{\partial}{\partial s}\ell_q(x;\xi) \cdot
\frac{\partial}{\partial t}\ell_q(x;\xi) \cdot
\frac{\partial}{\partial u}\ell_q(x;\xi) \cdot \left(- \ell_q(x;\xi)\right)^{-j}
p_q(x;\xi)^{3q-2}  dx
\end{align*}
for $s,t,u \in \{\mu,\sigma\}$ and $\xi \in \mathbb{R} \times \Sigma_{q,{a}}$.
The above  improper integral converges due to Corollary \ref{c3} in the case $n=3$.

The Fisher metric (resp. the Amari--\v{C}encov tensor) is a unique invariant  quadric (resp. cubic) tensor  under Markov embeddings up to scalar multiple (see~\cite[Chapter 5]{AJLS-book}).
\end{remark}
\subsection{Expression of the refined Riemann metrics}
We compute the exact value of 
\begin{equation}
\label{eq:4.1}
\begin{split}
g^{(q,{a})}_{\mu \mu}(\xi)
&=\frac{4}{(3-q)^2}\sum_{j=0}^1   \frac{b_j^2}{(Z_q\sigma)^{2(1-q)}\sigma^2}
\int_{\mathbb{R}}  \left(\frac{x-\mu}{\sigma}\right)^2 \frac{ p_q(x;\xi)^{2q-1}}{\left(-\ell_q(x,\xi)\right)^{j}} dx\\
&=\frac{4}{(3-q)^2}\sum_{j=0}^1   \frac{b_j^2}{(Z_q\sigma)^{2(1-q)}\sigma^2} \Phi(q,2,1,j;\xi),\\
g^{(q,{a})}_{\sigma \sigma}(\xi) 
&=\sum_{j=0}^1 \frac{b_j^2}{(Z_q\sigma)^{2(1-q)}\sigma^2} \int_{\mathbb{R}} 
\left\{1-\left(\frac{x-\mu}{\sigma}\right)^2\right\}^2  \frac{ p_q(x;\xi)^{2q-1}}{\left(-\ell_q(x,\xi)\right)^{j}} dx\qquad\\
&=\sum_{j=0}^1 \frac{b_j^2}{(Z_q\sigma)^{2(1-q)}\sigma^2} \sum_{k=0}^2\binom{2}{k} (-1)^k \Phi(q,2,k , j; \xi)
\end{split}
\end{equation}
for $\xi \in \mathbb{R}\times \Sigma_{q,{a}}$,
where we set
\begin{align*}
\Phi(q,n,k,j;\xi):= \int_{\mathbb{R}}  \left(\frac{x-\mu}{\sigma}\right)^{2k} \frac{ p_q(x;\xi)^{(n-1)(q-1)+q} }{\left(-\ell_q(x,\xi)\right)^{j}} dx.
\end{align*}
\begin{lemma}\label{l7}
For  $n\in \mathbb{N}, k\in \{0, 1,\ldots, n \}$ and  $\xi=(\mu,\sigma)\in\mathbb{R} \times \Sigma_{q,{a}}$,
then
\begin{align*}
&\Phi(q,n,k,0;\xi)\\
&=
\begin{cases}
\displaystyle \frac{\sigma}{(Z_q \sigma)^{(n-1)(q-1)+q}} \left(\frac{3-q}{q-1}\right)^{k+\frac12} B\left(\frac{3-q}{2(q-1)} + n-k, \frac12+k\right)
&\text{if \ }q > 1,\\
(2k-1)!!& \text{if \ }q=1,
\end{cases}
\end{align*}
where by convention $(2\cdot 0-1)!!:=1$.
\end{lemma}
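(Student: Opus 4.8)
The plan is to compute the integral $\Phi(q,n,k,0;\xi)$ directly by an explicit change of variables, reducing it in the case $q>1$ to a Beta integral and in the case $q=1$ to the standard Gaussian moments. First I would observe that for $j=0$ the integrand involves no factor $(-\ell_q(x;\xi))^{-j}$, so
\[
\Phi(q,n,k,0;\xi)=\int_{\mathbb{R}}\left(\frac{x-\mu}{\sigma}\right)^{2k} p_q(x;\xi)^{(n-1)(q-1)+q}\,dx,
\]
and that $p_q(x;\xi)^{(n-1)(q-1)+q}$ is, by the definition of the $q$-Gaussian density together with $\exp_q(\tau)^r=\{1+(1-q)\tau\}^{r/(1-q)}$, equal to $(Z_q\sigma)^{-\{(n-1)(q-1)+q\}}$ times
\[
\left\{1-\frac{1-q}{3-q}\left(\frac{x-\mu}{\sigma}\right)^2\right\}^{\frac{(n-1)(q-1)+q}{1-q}}.
\]
The convergence of this integral for $k\le n$ is guaranteed by Lemma~\ref{l3} (via Corollary~\ref{c3}), so the manipulations below are justified.

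Next I would substitute $y=(x-\mu)/\sigma$, pulling out a factor $\sigma$, and then treat the two cases. For $q=1$ one gets $(Z_1)^{-1}\int_{\mathbb{R}} y^{2k}e^{-y^2/2}\,dy$ divided by $\sigma^{\,0}$-type factors; since $Z_1=\sqrt{2\pi}$, this is exactly the $2k$-th moment of a standard normal, namely $(2k-1)!!$, matching the claimed value (with the convention $(-1)!!=1$ covering $k=0$). For $q>1$ the support is all of $\mathbb{R}$ and the integrand is even in $y$, so I would write the integral as $2\int_0^\infty$, then substitute $r=\frac{q-1}{3-q}y^2$ (equivalently $y=\sqrt{\frac{3-q}{q-1}}\,r^{1/2}$, $dy=\frac12\sqrt{\frac{3-q}{q-1}}\,r^{-1/2}dr$). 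This turns the integral into
\[
\left(\frac{3-q}{q-1}\right)^{k+\frac12}\int_0^\infty \frac{r^{(k+\frac12)-1}}{(1+r)^{-\frac{(n-1)(q-1)+q}{1-q}}}\,dr,
\]
up to the bookkeeping constants. Using the Beta identity $B(s-t,t)=\int_0^\infty r^{s-1}(1+r)^{-s}\,dr$ quoted earlier in the paper (with $s-t=k+\tfrac12$ and $s=\tfrac{(n-1)(q-1)+q}{q-1}+k+\tfrac12$), I would identify $t=\tfrac{(n-1)(q-1)+q}{q-1}=n-1+\tfrac{q}{q-1}$, and then check that $t-k=\tfrac{3-q}{2(q-1)}+n-k$ after simplifying $\tfrac{q}{q-1}-1=\tfrac{1}{q-1}$ and combining with $n-1$; this yields $B\!\left(\tfrac{3-q}{2(q-1)}+n-k,\tfrac12+k\right)$, as claimed. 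Finally I would reinsert the prefactor $\sigma/(Z_q\sigma)^{(n-1)(q-1)+q}$ and the powers of $\tfrac{3-q}{q-1}$ to arrive at the stated formula.

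The routine but error-prone part — and the step I expect to be the main obstacle — is the exponent and argument bookkeeping in the $q>1$ Beta-function identification: one must carefully track the exponent $\tfrac{(n-1)(q-1)+q}{1-q}$ (note the sign), verify that it is negative so the integral converges exactly under the condition $k\le n$ (equivalently $k<\tfrac12+\tfrac1{q-1}+n-1$, which is the hypothesis of Lemma~\ref{l3}), and then match $s$ and $t$ in $B(s-t,t)$ so that the first argument comes out as $\tfrac{3-q}{2(q-1)}+n-k$ rather than some shifted variant. Once the substitution and the algebraic simplification $\tfrac{q}{q-1}=1+\tfrac1{q-1}$ are done correctly, the rest is immediate, and the $q=1$ case follows from the well-known Gaussian moment formula together with $Z_1=\sqrt{2\pi}$.
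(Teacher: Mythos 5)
Your proposal is correct and follows essentially the same route as the paper: substitute $y=(x-\mu)/\sigma$, reduce the $q>1$ case via $r=\frac{q-1}{3-q}y^2$ to the Beta integral $\int_0^\infty r^{k-\frac12}(1+r)^{-(n-1+\frac{q}{q-1})}\,dr=B\bigl(k+\tfrac12,\tfrac{3-q}{2(q-1)}+n-k\bigr)$, and handle $q=1$ by the standard Gaussian moments (the paper phrases this via $\Gamma\bigl(k+\tfrac12\bigr)=(2k-1)!!\sqrt{\pi}/2^{k}$, which is the same computation). Two harmless bookkeeping slips worth fixing: the matching should give second Beta argument $t-\bigl(k+\tfrac12\bigr)$, not $t-k$ (your displayed check is off by $\tfrac12$, though your final $B\bigl(\tfrac{3-q}{2(q-1)}+n-k,\tfrac12+k\bigr)$ is right), and the Beta identity as you quote it (and indeed as the paper prints it) is mis-transcribed --- the form actually used is $\int_0^\infty r^{s-1}(1+r)^{-t}\,dr=B(s,t-s)$ for $t>s>0$.
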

\begin{proof}
We apply the change of variables with 
\[
y=\frac{1}{2}\left(\frac{x-\mu}{\sigma}\right)^2 \ \text{if \ $q=1$, \quad and }
\quad
y=\frac{q-1}{3-q}\left(\frac{x-\mu}{\sigma}\right)^2 \ \text{otherwise}.
\]

For $q=1$, we observe  that 
\begin{align*}
\Phi(1,n,k,0;\xi)&=\int_{\mathbb{R}}   p_1(x;\xi)^{(n-1)(1-1)+1} \left(\frac{x-\mu}{\sigma}\right)^{2k} dx\\
&=
2\int_{0}^{\infty} \frac{1}{\sqrt{2\pi} \sigma }\exp\left(-\frac1{2}\left(\frac{x-\mu}{\sigma}\right)^2 \right)^{(n-1)(1-1)+1} \left(\frac{x-\mu}{\sigma}\right)^{2k}dx\\
&=
\frac{2^k}{\sqrt{\pi}}\int_{0}^{\infty} e^{-y}  y^{k-\frac12}dy\\
&=\frac{2^k}{\sqrt{\pi}}\Gamma\left(k+\frac12\right)
=\frac{2^k}{\sqrt{\pi}}\frac{(2k-1)!!}{2^{k}}\sqrt{\pi}\\
&=(2k-1)!!,
\end{align*}
where $\Gamma(\cdot)$ stands for the Gamma function, that is
\[
\Gamma(s):=\int_0^\infty e^{-x} x^{s-1} dx\qquad \text{for\ }s>0.
\]

For $q>1$, it tuns out that  
\begin{align*}
&\Phi(q,n,k,0;\xi)\\&=\int_{\mathbb{R}}   p_q(x;\xi)^{(n-1)(q-1)+q} \left(\frac{x-\mu}{\sigma}\right)^{2k} dx\\
&=
2\int_{0}^{\infty}\frac{1}{(Z_q \sigma)^{(n-1)(q-1)+q}}\left[1+\frac{q-1}{3-q}\left(\frac{x-\mu}{\sigma}\right)^2 \right]^{\frac{(n-1)(q-1)+q}{1-q}} \left(\frac{x-\mu}{\sigma}\right)^{2k}dx\\
&=
\frac{\sigma}{(Z_q \sigma)^{(n-1)(q-1)+q}} \left(\frac{3-q}{q-1}\right)^{k+\frac12} 
\int_{0}^{\infty}\frac{ y^{k-\frac12}}{(1+y)^{n-1+\frac{q}{q-1} } } dy\\
&=
\frac{\sigma}{(Z_q \sigma)^{(n-1)(q-1)+q}} \left(\frac{3-q}{q-1}\right)^{k+\frac12}
B\left(\frac{3-q}{2(q-1)} + n-k, \frac12+k\right).
\qquad
\qed
\end{align*}
\end{proof}

\begin{proposition}\label{g}
For ${a}=1$ and $\xi=(\mu, \sigma) \in \mathbb{R} \times \Sigma_{q,{a}}$, we have  that
\begin{align*}
g^{(q,1)}_{\mu \mu}(\xi)
=\frac{1}{\sigma^2},\qquad
g^{(q,1)}_{\sigma\sigma}(\xi)
=\frac{3-q}{\sigma^2}.
\end{align*}
\end{proposition}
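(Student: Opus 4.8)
The plan is to specialize the general formulas \eqref{eq:4.1} to $a=1$ and then evaluate the resulting integrals with Lemma \ref{l7}. First I would note, via Remark \ref{example}, that $b_1^2(q,1)=a-1=0$ and $b_0^2(q,1)=a(q-1)+1=q$, so in \eqref{eq:4.1} each sum over $j$ collapses to its $j=0$ term with weight $q$; that is,
\begin{align*}
g^{(q,1)}_{\mu\mu}(\xi)&=\frac{4q}{(3-q)^2(Z_q\sigma)^{2(1-q)}\sigma^2}\,\Phi(q,2,1,0;\xi),\\
g^{(q,1)}_{\sigma\sigma}(\xi)&=\frac{q}{(Z_q\sigma)^{2(1-q)}\sigma^2}\sum_{k=0}^{2}\binom{2}{k}(-1)^k\,\Phi(q,2,k,0;\xi).
\end{align*}
All the improper integrals here converge by Corollary \ref{c3} with $n=2$, so it remains to compute $\Phi(q,2,k,0;\xi)$ for $k\in\{0,1,2\}$, which Lemma \ref{l7} does.

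The case $q=1$ is immediate: Lemma \ref{l7} gives $\Phi(1,2,k,0;\xi)=(2k-1)!!$ and $(Z_1\sigma)^{2(1-q)}=1$, whence $g^{(1,1)}_{\mu\mu}=\tfrac{4}{4\sigma^2}\cdot1=\tfrac1{\sigma^2}$ and $g^{(1,1)}_{\sigma\sigma}=\tfrac1{\sigma^2}\,(1-2\cdot1+3)=\tfrac2{\sigma^2}=\tfrac{3-q}{\sigma^2}$.

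For $1<q<3$ I would substitute the $q>1$ branch of Lemma \ref{l7}. Writing $s:=\tfrac{3-q}{2(q-1)}$, the key bookkeeping fact is that $(Z_q\sigma)^{2(1-q)}(Z_q\sigma)^{2q-1}=Z_q\sigma$, while $Z_q=\big(\tfrac{3-q}{q-1}\big)^{1/2}B(s,\tfrac12)$; hence every power of $\sigma$ and of $Z_q$ cancels except the desired $\sigma^{-2}$, leaving only ratios of beta functions. For $g_{\mu\mu}$ the surviving factor is $\tfrac{B(s+1,3/2)}{B(s,1/2)}$, and using $\Gamma(s+1)=s\Gamma(s)$, $\Gamma(3/2)=\tfrac12\Gamma(1/2)$, $\Gamma(s+5/2)=(s+3/2)(s+1/2)\Gamma(s+1/2)$ together with $s+\tfrac12=\tfrac1{q-1}$ and $s+\tfrac32=\tfrac{q}{q-1}$, this ratio equals $\tfrac{(3-q)(q-1)}{4q}$, exactly cancelling the prefactor $\tfrac{4q}{(3-q)(q-1)}$ and giving $g^{(q,1)}_{\mu\mu}=\sigma^{-2}$. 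For $g_{\sigma\sigma}$ I would first observe that $\sum_{k=0}^2\binom{2}{k}(-1)^k\big(\tfrac{x-\mu}{\sigma}\big)^{2k}=\big(1-\big(\tfrac{x-\mu}{\sigma}\big)^2\big)^2$, then change variables to $y=\tfrac{x-\mu}{\sigma}$, write $p_q(x;\xi)^{2q-1}=(Z_q\sigma)^{-(2q-1)}(1+\beta y^2)^{(2q-1)/(1-q)}$ with $\beta=\tfrac{q-1}{3-q}$, and use $1-y^2=\beta^{-1}\big[(\beta+1)-(1+\beta y^2)\big]$; expanding the square turns the integral into a combination of $\int_{\mathbb R}(1+\beta y^2)^p\,dy=\beta^{-1/2}B(\tfrac12,-p-\tfrac12)$ for $p\in\{m,m+1,m+2\}$ with $m=\tfrac{2q-1}{1-q}$, and the three beta values collapse (using $\beta+1=\tfrac2{3-q}$ and the $s$-identities) to $\tfrac{3-q}{q}Z_q$, which yields $g^{(q,1)}_{\sigma\sigma}=\tfrac{3-q}{\sigma^2}$. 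One may instead feed the three values $B(s+2-k,\tfrac12+k)$ from Lemma \ref{l7} directly into the alternating sum and simplify via the same $\Gamma$-identities.

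The substitution of Lemma \ref{l7} is routine; the only real obstacle is the bookkeeping — keeping the exponents of $Z_q\sigma$ straight in \eqref{eq:4.1} and finding the reorganization of $(1-y^2)^2$ (equivalently, the combination of $B(s,\tfrac12)$, $B(s+1,\tfrac32)$, $B(s+2,\tfrac52)$) that makes the $g_{\sigma\sigma}$-sum telescope to the single clean factor $\tfrac{3-q}{q}$. No convergence difficulty arises, as Corollary \ref{c3} with $n=2$ already covers every integral in sight.
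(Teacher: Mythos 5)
Your proposal is correct and follows essentially the same route as the paper: use $b_1^2(q,1)=0$ and $b_0^2(q,1)=q$ to reduce \eqref{eq:4.1} to the $j=0$ terms, then evaluate $\Phi(q,2,k,0;\xi)$ via Lemma \ref{l7} and the standard beta/Gamma recursions (with $s+\tfrac12=\tfrac1{q-1}$, $s+\tfrac32=\tfrac q{q-1}$), which is exactly how the paper's proof proceeds. Your rewriting of the $g_{\sigma\sigma}$ integral via $1-y^2=\beta^{-1}[(\beta+1)-(1+\beta y^2)]$ is only a cosmetic reorganization of the paper's alternating sum $\sum_k\binom2k(-1)^k\Phi(q,2,k,0;\xi)$, and you correctly note the two are interchangeable; all stated cancellations (the ratio $B(s+1,\tfrac32)/B(s,\tfrac12)=\tfrac{(3-q)(q-1)}{4q}$ and the collapse to $\tfrac{3-q}{q}Z_q$) check out.
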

\begin{proof}
It follows from Lemma \ref{l7} that 
\[
 \Phi(1,2,0,0;\xi)=1, \qquad
 \Phi(1,2,1,0;\xi)=1, \qquad 
 \Phi(1,2,2,0;\xi)=3,
\]
implying
\begin{align*}
g^{(1,1)}_{\mu \mu}(\xi)
=b_0^2(q,1)\frac{1}{\sigma^2}=\frac{1}{\sigma^2},\qquad
g^{(1,1)}_{\sigma \sigma}(\xi) 
=b_0^2(q,1)\sum_{j=0}^1 \frac{1}{\sigma^2} (1-2+3   )=\frac{2}{\sigma^2}.
\end{align*}

Assume $q>1$.
By the property that 
\[
B(s+1,t)=\frac{st}{s+t} B(s,t) \qquad \text{for\ }s,t>0,
\]
we have that 
\begin{align*}
\Phi(q,2,k,0;\xi)
&=
 \frac{\sigma}{(Z_q \sigma)^{(2-1)(q-1)+q}} \left(\frac{3-q}{q-1}\right)^{k+\frac12} B\left(\frac{3-q}{2(q-1)} + 2-k, \frac12+k\right)\\
&=
 \frac{\sigma}{(Z_q \sigma)^{(q-1)+q}} \left(\frac{3-q}{q-1}\right)^{k+\frac12}\frac{f_2(k)}{(\frac{1}{q-1} +1)\cdot \frac{1}{q-1}}  B\left(\frac{3-q}{2(q-1)} , \frac12\right)\\
&=
 \frac{1}{(Z_q \sigma)^{2(q-1)}} \left(\frac{3-q}{q-1}\right)^{k}\frac{(q-1)^2f_2(k)}{q},
 \end{align*}
where we set 
\begin{align*}
f_2(0):&=\left(\frac{3-q}{2(q-1)}+1\right)\cdot \frac{3-q}{2(q-1)}=\frac{(q+1)(3-q)}{4(q-1)^2},\\
f_2(1):&=\frac{3-q}{2(q-1)}\cdot \frac{1}{2}=\frac{3-q}{4(q-1)},\\
f_2(2):&=\frac32\cdot \frac12=\frac34.
\end{align*}
This leads to  that 
\begin{align*}
g^{(q,1)}_{\mu \mu}(\xi)
&=\frac{4}{(3-q)^2}  \frac{b_0^2(q,1)}{(Z_q\sigma)^{2(1-q)}\sigma^2} \Phi(q,2,1,0;\xi)=  \frac{1}{\sigma^2} ,\\
g^{(q,1)}_{\sigma \sigma}(\xi) 
&=\frac{b_0^2(q,1)}{(Z_q\sigma)^{2(1-q)}\sigma^2} \sum_{k=0}^2\binom{2}{k} (-1)^k \Phi(q,2,k , 0; \xi)\\
&=\frac{1}{\sigma^2} \sum_{k=0}^2\binom{2}{k}\left\{ (-1)^k  \left(\frac{3-q}{q-1}\right)^{k} (q-1)^2f_2(k)\right\}
=\frac{3-q}{\sigma^2}.\qquad \qed
\end{align*}
\end{proof}

Fix $n,j\in \mathbb{N}, k\in \{0, 1,\ldots, n \}$ and  $\xi=(\mu,\sigma)\in\mathbb{R} \times \Sigma_{q,{a}}$.
Let us compute $\Phi(q,n,k,j;\xi)$   with the use of the residue theorem.
Note that 
\[
\Phi(q,n,k,j;\mu, \sigma)=\Phi(q,n,k,j;0,\sigma).
\]
Define  a complex valued function $\phi_{q,n,k,j;\sigma}$ on $\mathbb{C}$ by
\begin{align*}
\phi_{q,n,k,j;\sigma}(z)
&:= \left(\frac{z}{\sigma}\right)^{2k} \frac{p_q(z;0,\sigma)^{(n-1)(q-1)+q} }{\left(-\ell_q(z;0,\sigma)\right)^{j}}\\
&=\left(\frac{z}{\sigma}\right)^{2k} {p_q(z;0,\sigma)^{(n-1)(q-1)+q} } \left\{\frac{z^2+r(q,\sigma)^2}{(Z_q\sigma)^{1-q} (3-q)\sigma^2}\right\}^{-j},
\end{align*}
where we set 
\[
r(q,\sigma):=\sqrt{-\ln_q \left(\frac1{Z_q\sigma}\right) \cdot (Z_q \sigma)^{1-q} (3-q)\sigma^2  }.
\]
The function $\phi_{q,n,k,j;\sigma}$  has  poles of order $j$  at $\pm \i r(q,\sigma)$.
For $R>r(q,\sigma)$, let $L_R$ and $C_R$ be smooth curves in $\mathbb{C}$ defined respectively by 
\begin{align*}
L_R:=\{z :[-R, R]\to \mathbb{C}\ |\ z(\theta)=\theta \}, \qquad 
C_R:=\{z:[0,\pi]\to \mathbb{C}\ |\ z(\theta)= R e^{\i \theta} \}.
\end{align*}
The residue theorem yields that
\begin{equation}\label{res}
\int_{L_R\cup C_R}\phi_{q,n,k,j;\sigma}(z)dz
=2\pi\i \cdot \mathrm{Res}( \phi_{q,n,k,j;\sigma}; \i r(q,\sigma)), 
\end{equation}
where 
$\mathrm{Res}( \phi_{q,n,k,j;\sigma}; \i r(q,\sigma))$ stands for the residue of 
$\phi_{q,n,k,j;\sigma}$ at $z=  \i r(q,\sigma)$.

\begin{lemma}\label{resthm}
For  $ n,j\in \mathbb{N}, k\in \{0, 1,\ldots, n \}$ and $(\mu, \sigma)\in \mathbb{R}\times\Sigma_{q,{a}}$, 
then 
\[
\Phi(q,n,k,j;\mu,\sigma)=2\pi\i \cdot \mathrm{Res}( \phi_{q,n,k,j;\sigma}, \i r(q,\sigma)).
\]
\end{lemma}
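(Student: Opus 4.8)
The plan is to pass to the limit $R\to\infty$ in the residue-theorem identity \eqref{res}. First I would observe that the left-hand side of \eqref{res} splits as
\[
\int_{L_R}\phi_{q,n,k,j;\sigma}(z)\,dz+\int_{C_R}\phi_{q,n,k,j;\sigma}(z)\,dz,
\]
and that the restriction of $\phi_{q,n,k,j;\sigma}$ to the real segment $L_R$ is exactly the integrand of $\Phi(q,n,k,j;0,\sigma)$. Hence $\int_{L_R}\phi_{q,n,k,j;\sigma}(z)\,dz\to \Phi(q,n,k,j;0,\sigma)=\Phi(q,n,k,j;\mu,\sigma)$ as $R\to\infty$, where the convergence and finiteness of this improper integral is guaranteed by Corollary \ref{c3} (applied with the given $n$, with $\gamma=k\le n$, and with the nonnegative integer $j$), after the trivial change of variables relating $\Phi(q,n,k,j;0,\sigma)$ to the standard form there.

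The main work is to show $\int_{C_R}\phi_{q,n,k,j;\sigma}(z)\,dz\to 0$ as $R\to\infty$. On $C_R$ we have $|z|=R$, so $|z/\sigma|^{2k}=(R/\sigma)^{2k}$, the denominator $|z^2+r(q,\sigma)^2|^{-j}$ is $O(R^{-2j})$, and $|p_q(z;0,\sigma)^{(n-1)(q-1)+q}|$ decays like $R^{2\frac{(n-1)(q-1)+q}{1-q}}$ for $q>1$ (and faster than any power, in fact Gaussian, for $q=1$, where moreover $j$ contributes no pole since $r(1,\sigma)$ is still a genuine positive number but $\ell_1$ never vanishes on $\mathbb{R}$ — one should note the slight care needed at $q=1$, where the "pole" picture persists because $-\ell_q(\cdot;0,\sigma)$ is bounded below by a positive constant). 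Combining these estimates with the length $\pi R$ of $C_R$, the integral over $C_R$ is $O\!\big(R^{\,1+2k-2j+2\frac{(n-1)(q-1)+q}{1-q}}\big)$, and the exponent is negative precisely under the integrability condition already verified in Corollary \ref{c3}, namely $k<\tfrac12+\tfrac1{q-1}+n-1$; since $k\le n$ and $n<\tfrac12+\tfrac1{q-1}+n-1$ for $1<q<3$, this holds. I would also need to check that $\phi_{q,n,k,j;\sigma}$ is holomorphic in the closed upper half-plane except for the single pole at $\i r(q,\sigma)$ — here one uses that $1+\frac{q-1}{3-q}(z/\sigma)^2$, equivalently $z^2+r_0^2$ for the appropriate $r_0$, has no zeros in the upper half-plane other than on the imaginary axis, and that the (possibly non-integer) power $p_q(z;0,\sigma)^{(n-1)(q-1)+q}$ admits a single-valued holomorphic branch there since its base stays in a slit plane avoiding the negative reals; for $q=1$ the exponential is entire so this is automatic.

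With both limits in hand, letting $R\to\infty$ in \eqref{res} gives
\[
\Phi(q,n,k,j;\mu,\sigma)+0 = 2\pi\i\cdot\mathrm{Res}\big(\phi_{q,n,k,j;\sigma};\i r(q,\sigma)\big),
\]
which is the assertion. The hard part will be the branch-cut bookkeeping for the non-integer power $p_q(z;0,\sigma)^{(n-1)(q-1)+q}$ when $q>1$: one must fix a branch of the power function that is holomorphic on a neighbourhood of the closed upper half-plane, verify that the only singularity enclosed by $L_R\cup C_R$ is the order-$j$ pole at $\i r(q,\sigma)$ coming from the $(z^2+r(q,\sigma)^2)^{-j}$ factor, and confirm the decay estimate is uniform in $\theta\in[0,\pi]$ so that the $C_R$-integral genuinely vanishes. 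The growth-rate comparison itself is routine given Lemma \ref{l3} and Corollary \ref{c3}, so once the branch is pinned down the argument closes quickly.
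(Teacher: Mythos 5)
Your overall route is the same as the paper's: pass to the limit $R\to\infty$ in \eqref{res}, identify $\int_{L_R}\phi_{q,n,k,j;\sigma}\,dz$ with $\Phi(q,n,k,j;0,\sigma)=\Phi(q,n,k,j;\mu,\sigma)$ via Corollary~\ref{c3}, and kill the arc by an ML estimate; for $q>1$ your exponent count is exactly the paper's, since $1+2k-2j+2\tfrac{(n-1)(q-1)+q}{1-q}=2(k-j)+1-2n+\tfrac{2}{1-q}<0$ for $k\le n$, $j\ge1$, $1<q<3$, and the bound $\bigl|1+\tfrac{q-1}{3-q}R^2e^{2\i\theta}/\sigma^2\bigr|\ge cR^2$ is indeed uniform in $\theta$. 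However, your treatment of $q=1$ fails: there is no Gaussian decay of $p_1(\cdot;0,\sigma)$ on $C_R$, because $|p_1(Re^{\i\theta};0,\sigma)|$ is a constant times $\exp\bigl(-R^2\cos2\theta/(2\sigma^2)\bigr)$, which grows like $e^{R^2/(2\sigma^2)}$ near $\theta=\pi/2$; so the arc integral is not $o(1)$ and your argument does not close the $q=1$ case. This cannot be repaired by a sharper estimate: already for $k=0$, $j=1$ one has $\Phi(1,n,0,1;0,\sigma)=\frac{2\sigma}{\sqrt{2\pi}}\,\frac{\pi}{r}\,e^{r^2/(2\sigma^2)}\operatorname{erfc}\bigl(\frac{r}{\sqrt{2}\,\sigma}\bigr)$ with $r=r(1,\sigma)$, whereas $2\pi\i\cdot\mathrm{Res}(\phi_{1,n,0,1;\sigma};\i r)=\frac{2\sigma}{\sqrt{2\pi}}\,\frac{\pi}{r}\,e^{r^2/(2\sigma^2)}$, and the two differ by the factor $\operatorname{erfc}<1$. (The paper's own proof asserts the same vanishing at $q=1$ and is open to the same objection, but that does not rescue your step.)

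Your branch bookkeeping for $q>1$ is also wrong as stated: the base $1+\tfrac{q-1}{3-q}(z/\sigma)^2$ does \emph{not} stay in a plane slit along the negative reals — inside the closed upper half-plane it is a nonpositive real number exactly on the ray $\{\i y:\ y\ge \sigma\sqrt{(3-q)/(q-1)}\}$, whose endpoint is a branch point lying above the pole $\i r(q,\sigma)$ (because $r(q,\sigma)^2=\tfrac{1-(Z_q\sigma)^{1-q}}{q-1}(3-q)\sigma^2<\tfrac{3-q}{q-1}\sigma^2$) yet inside $L_R\cup C_R$ once $R$ is large. Hence, unless $\tfrac{(n-1)(q-1)+q}{1-q}$ is an integer, $\phi_{q,n,k,j;\sigma}$ is not a single-valued holomorphic function on the upper half-disc punctured at $\i r(q,\sigma)$, and \eqref{res} itself requires a genuine argument (a keyhole/cut contour, or a branch choice plus a proof that the cut contributes nothing). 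You correctly flagged this as the delicate point, but the resolution you sketch does not exist; the paper avoids the issue only by quoting \eqref{res} without proof, so this gap is not filled on either side.
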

\begin{proof}
If we show that 
\[
\lim_{R\to \infty} \int_{ C_R}\phi_{q,n,k,j;\sigma}(z)dz=0, 
\]
then we have the desired result by letting $R\to \infty$ in \eqref{res}.

Take $R>r(q,\sigma)$ large enough.
We calculate that 
\begin{align*}
&\left|\int_{ C_R}\phi_{q,n,k,j;\sigma}(z)dz\right| \\
&
\leq
R \int_0^\pi \left|  \phi_{q,n,k,j;\sigma}( R e^{\i \theta})  \right| d\theta \\
&=
R \int_0^\pi   \left(\frac{R}{\sigma} \right)^{2k}   \left| p_q(R e^{\i \theta };0,\sigma) \right|^{(n-1)(q-1)+q} 
\left|\frac{R^2e^{2\i \theta}+r(q,\sigma)^2}{(Z_q\sigma)^{1-q} (3-q)\sigma^2}\right|^{-j} d\theta\\
&\leq 
CR^{2(k-j)+1}
\int_0^\pi \left| \exp_q\left(-\frac{R^2e^{2\i \theta }}{(3-q)\sigma^2}\right) \right|^{(n-1)(q-1)+q}d\theta,  
\end{align*}
where the constant $C$ depends on $q$ and $\sigma$.

In the case $q=1$, we have that 
\begin{align*}
  \left| \exp_1\left(-\frac{R^2e^{2\i \theta }}{(3-1)\sigma^2}\right) \right|^{(n-1)(1-1)+q} 
  =\exp \left(- \frac{R^2 \cos 2\theta}{2\sigma^2}\right),
 \end{align*} 
consequently
\begin{align*}
\left|\int_{ C_R}\phi_{q,n,k,j;\sigma}(z)dz\right|
\leq 
CR^{2(k-j)+1}
\int_0^\pi\exp \left(- \frac{R^2 \cos 2\theta}{2\sigma^2}\right) d\theta
\xrightarrow{R\to\infty}0.
\end{align*}

In the case  $q>1$,  we observe that 
\begin{align*}
  \left|  \exp_q\left(-\frac{R^2e^{2\i \theta }}{(3-q)\sigma^2}\right)\right|^{(n-1)(q-1)+q}
& =\left|  1+\frac{q-1}{3-q} \frac{R^2 e^{2\i\theta}}{\sigma^2} \right|^{\frac{^{(n-1)(q-1)+q}}{1-q}}
\leq  C' R^{-2n+\frac2{1-q}},
\end{align*} 
where  the constant $C'$ depends on $q$ and $\sigma$.
This  yields that 
\begin{align*}
\left|\int_{ C_R}\phi_{q,n,k,j;\sigma}(z)dz\right|
\leq 
C \cdot C'  R^{2(k-j)+1-2n+\frac2{1-q}} \cdot \pi .
\end{align*}
The right-hand side converges to 0 as $R\to \infty$ 
since we have 
\[
2(k-j)+1-2n+\frac2{1-q}\leq -1+\frac2{1-q}<0
\]
due to the assumption  $k\leq n$ and $j\geq 1$.
\qquad
$\qed$
\end{proof}

\begin{proposition}\label{p3}
For $\xi=(\mu, \sigma) \in \mathbb{R} \times \Sigma_{q,{a}}$, then 
\begin{align*}
g^{(q,{a})}_{\mu \mu}(\xi)
&= \frac{b_0^2(q,{a})}{b_0^2(q,1) \sigma^2} -\frac{4}{3-q}\frac{\pi b_1^2(q,{a})}{(Z_q\sigma)^{1-q}\sigma^2} r(q,\sigma),\\
g^{(q,{a})}_{\sigma \sigma}(\xi) 
&=\frac{(3-q)b_0^2(q,{a})}{b_0^2(q,1) \sigma^2} 
+\frac{\pi (3-q) b_1^2(q,1)}{(Z_q\sigma)^{1-q}r(q,\sigma)} \left\{1+\left(\frac{r(q,\sigma)}{\sigma}\right)^2\right\}^2.
\end{align*}
\end{proposition}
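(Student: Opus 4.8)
The plan is to start from the representation \eqref{eq:4.1}, which already displays $g^{(q,{a})}_{\mu\mu}(\xi)$ and $g^{(q,{a})}_{\sigma\sigma}(\xi)$ as finite linear combinations, with coefficients $b_j^2(q,{a})$, of the integrals $\Phi(q,2,k,j;\xi)$ for $j\in\{0,1\}$ and $k\in\{0,1,2\}$. I would treat the $j=0$ and the $j=1$ contributions separately.

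For $j=0$ there is nothing new to integrate. By Corollary~\ref{a=1} one has $b_1^2(q,1)=0$, so $g^{(q,1)}$ is carried entirely by its $j=0$ part; hence Proposition~\ref{g} is precisely the statement that $\tfrac{4}{(3-q)^2}(Z_q\sigma)^{2(q-1)}\sigma^{-2}\Phi(q,2,1,0;\xi)=b_0^2(q,1)^{-1}\sigma^{-2}$ and $(Z_q\sigma)^{2(q-1)}\sigma^{-2}\sum_{k=0}^2\binom{2}{k}(-1)^k\Phi(q,2,k,0;\xi)=(3-q)b_0^2(q,1)^{-1}\sigma^{-2}$. Multiplying by $b_0^2(q,{a})$ yields the first summand of each claimed formula.

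For $j=1$ I would invoke Lemma~\ref{resthm}, reducing $\Phi(q,2,k,1;\xi)$ to $2\pi\i\cdot\mathrm{Res}(\phi_{q,2,k,1;\sigma};\i r(q,\sigma))$. Since $\phi_{q,2,k,1;\sigma}$ has only a simple pole at $\i r(q,\sigma)$, coming from the single factor $(z^2+r(q,\sigma)^2)^{-1}$, the residue is $(\i r(q,\sigma)/\sigma)^{2k}\,p_q(\i r(q,\sigma);0,\sigma)^{2q-1}\,(Z_q\sigma)^{1-q}(3-q)\sigma^2/(2\i r(q,\sigma))$. The decisive ingredient — and the step I expect to carry the proof — is the identity $p_q(\i r(q,\sigma);0,\sigma)=1$. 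It follows from the definition of $r(q,\sigma)$ together with the relation $t^{q-1}\ln_q(t)=-\ln_q(1/t)$ used in the proof of Proposition~\ref{concave}: taking $t=Z_q\sigma$ gives $r(q,\sigma)^2/((3-q)\sigma^2)=-\ln_q(1/(Z_q\sigma))(Z_q\sigma)^{1-q}=\ln_q(Z_q\sigma)$, whence $p_q(\i r(q,\sigma);0,\sigma)=\tfrac{1}{Z_q\sigma}\exp_q(\ln_q(Z_q\sigma))=1$ and thus $p_q(\i r(q,\sigma);0,\sigma)^{2q-1}=1$. Using $(\i r/\sigma)^{2k}=(-1)^k(r/\sigma)^{2k}$, this produces the closed form $\Phi(q,2,k,1;\xi)=\pi(-1)^k(Z_q\sigma)^{1-q}(3-q)\,\sigma^{2-2k}\,r(q,\sigma)^{2k-1}$.

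To finish, I would substitute this into the $j=1$ summands of \eqref{eq:4.1}. For $g^{(q,{a})}_{\mu\mu}$ only $k=1$ occurs, producing the single extra term $-\tfrac{4}{3-q}\tfrac{\pi b_1^2(q,{a})}{(Z_q\sigma)^{1-q}\sigma^2}r(q,\sigma)$. For $g^{(q,{a})}_{\sigma\sigma}$ the sign $(-1)^k$ in $\Phi$ cancels the binomial sign $(-1)^k$, so the $j=1$ contribution equals $\tfrac{\pi(3-q)b_1^2(q,{a})}{(Z_q\sigma)^{1-q}\sigma^2}\sum_{k=0}^{2}\binom{2}{k}\sigma^{2-2k}r(q,\sigma)^{2k-1}$, and the elementary identity $\sum_{k=0}^{2}\binom{2}{k}\sigma^{2-2k}r^{2k-1}=\tfrac{\sigma^2}{r}(1+(r/\sigma)^2)^2$ yields the claimed correction term. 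The case $q=1$ runs the same way ($r(1,\sigma)^2/(2\sigma^2)=\log(Z_1\sigma)$, $p_1(\i r(1,\sigma);0,\sigma)=1$, and Lemma~\ref{resthm} applies verbatim), or may be obtained as the limit $q\downarrow1$ of the general formula. The main obstacle is the residue computation of the previous paragraph: keeping track of the exponents (notably $(n-1)(q-1)+q=2q-1$ at $n=2$ and the matching powers of $Z_q\sigma$ and $\sigma$) and, above all, establishing $p_q(\i r(q,\sigma);0,\sigma)=1$.
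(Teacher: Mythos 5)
Your proposal is correct and follows essentially the same route as the paper: split \eqref{eq:4.1} into the $j=0$ part (handled by Proposition~\ref{g} together with $b_1^2(q,1)=0$ from Corollary~\ref{a=1}) and the $j=1$ part (handled by Lemma~\ref{resthm} and the simple-pole residue using $p_q(\i r(q,\sigma);0,\sigma)=1$, which you moreover justify explicitly whereas the paper only asserts it). One remark: your computation yields the coefficient $b_1^2(q,{a})$ in the correction term of $g^{(q,{a})}_{\sigma\sigma}$, consistent with the $\mu\mu$ case; the $b_1^2(q,1)$ printed in the statement is evidently a misprint, since $b_1^2(q,1)=0$ would make that term vanish.
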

\begin{proof}
It follows from Lemma \ref{resthm} that 
\begin{align*}
&\Phi(q,n,k,1;\xi)\\
&=2\pi\i \cdot \mathrm{Res}( \phi_{q,n,k,1;\sigma}, \i r(q,\sigma))\\
&=2\pi\i \lim_{z\to\i r(q,\sigma) } \left\{  \left(z-\i  r(q,\sigma)\right) \cdot \phi_{q,n,k,j;\sigma}(z) \right\}\\
&=2\pi\i \lim_{z\to\i r(q,\sigma) }\left(\frac{z}{\sigma}\right)^{2k} {p_q(z;0,\sigma)^{(n-1)(q-1)+q} } \left\{\frac{z+\i r(q,\sigma)}{(Z_q\sigma)^{1-q} (3-q)\sigma^2}\right\}^{-1}\\
&=2\pi\i \cdot   \left(\frac{\i r(q,\sigma)}{\sigma}\right)^{2k} \frac{(Z_q\sigma)^{1-q}(3-q)\sigma^2}{2\i  r(q,\sigma) } \\
&=(-1)^k \frac{\pi (Z_q\sigma)^{1-q}(3-q)}{\sigma^{2(k-1)}} r(q,\sigma)^{2k-1},
\end{align*}
where we used $p_q(\i r(q,\sigma);0,\sigma )=1$.
This with  Proposition \ref{g} and \eqref{eq:4.1} concludes the proof of the proposition. \qquad $\qed$
\end{proof}

\begin{remark}
In the case $a=1$,  the Riemannian manifold $(\mathcal{S}_{q,1}, g^{(q,1)})$ has a constant curvature $-1/(3-q)$.
This means that all $(\mathcal{S}_{q,1}, g^{(q,1)})$ for $1\leq q<3$ are homothetic to each other.
However, Proposition \ref{p3} suggests that this homothety may fail for $a\neq 1$.
\end{remark}

\section{Concluding remarks}
In this note, we presented gauge freedom of entropies on the subset $\mathcal{S}_q$ of all $q$-Gaussian densities  for $1\leq q<3$. 
%
We showed that  a constant multiple of each $(q, a)$-entropy coincides with the Boltzmann--Shannon entropy if $q=1$,  and  the Tsallis entropy otherwise. 
However,  any constant multiple of the $(q, a)$-relative entropy differs from  the $(q,1)$-relative entropy for $a\neq 1$.
We remark that the $(q, 1)$-relative entropy 
coincides with the Kullback--Leibler divergence if $q=1$, 
and the Tsallis relative entropy of the Csisz\'ar type otherwise. 

In information geometry, 
the Kullback--Leibler divergence projection from observed data to a statistical model
attains the maximum likelihood estimator (see \cite[Chapter 4]{AN-book}). 
The terminology ``maximum" depends on a criterion.
It is known that 
higher-order asymptotic theory of estimation
and Bayesian statistics improve the maximum likelihood estimator in another criterion.
Ishige, Salani and  the second named author  showed  in \cite[Theorem 3.2]{IST-2019} that 
the concavity related to the case $(q,a)=(1,1/2)$ is  the strongest concavity among all admissible  concavities preserved by the heat flow in Euclidean space.
We expect that  the $(1, 1/2)$-relative entropy improves the maximum likelihood estimator.
%
\vspace{-5pt}
\bibliography{bibtex-takatsu2}
\bibliographystyle{splncs04}
\end{document}